\begin{document}
\title[Scheduling fixed length quarantines]{Scheduling fixed length quarantines to minimize the total number of 
fatalities during an epidemic}

\author[Feng]{Yuanyuan Feng}
\address{%
  Department of Mathematics,
  Pennsylvania State University,
  State College, PA 16802, USA.}
\email{yzf58@psu.edu}

\author[Iyer]{Gautam Iyer}
\address{%
  Department of Mathematical Sciences,
  Carnegie Mellon University,
  Pittsburgh, PA 15213, USA.}
\email{gautam@math.cmu.edu}

\author[Li]{Lei Li}
\address{%
  School of Mathematical Sciences, Institute of Natural Sciences, MOE-LSC,  Shanghai Jiao Tong University, Shanghai 200240,
  P.R. China.
}
\email{leili2010@sjtu.edu.cn}
\thanks{%
  The work of GI was partially supported by the National Science Foundation, under grant DMS-1814147, and the Center for Nonlinear Analysis.
  The work of L. Li was partially sponsored by NSFC 11901389 and Shanghai Sailing Program 19YF1421300. 
}

\begin{abstract}
  We consider a susceptible, infected, removed (SIR) system where the transmission rate may be temporarily reduced for a fixed amount of time.
  We show that in order to minimize the total number of fatalities, the transmission rate should be reduced on a single contiguous time interval, and we characterize this interval via an integral condition.
  We conclude with a few numerical simulations showing the actual reduction obtained.
\end{abstract}
\subjclass[2010]{Primary
  37N25; 
  Secondary
  92-10. 
}
\maketitle

\section{Introduction}

The SIR model was introduced by R. Ross and W. Hammer to model the spread of infectious diseases (see~\cites{KermackMcKendrickEA27,BrauerCastilloChavez12,Weiss13}).
In this model, we let $S$ denote the fraction of individuals that are \emph{susceptible} to the disease, $I$ the fraction of individuals that are \emph{infectious}, and $R$ the fraction of individuals that are \emph{removed}.
Removed individuals are those who have contracted the disease and have either recovered and acquired immunity, or have died.
The evolution of these three quantities is modelled by
\begin{subequations}
\begin{align}
  \label{e:S}
    \partial_t S &= -\beta SI \,,\\
  \label{e:I}
    \partial_t I &= \beta S I-\gamma I \,,\\
  \label{e:R}
    \partial_t R &= \gamma I \,.
\end{align}
\end{subequations}
Here $\beta$ is rate at which infectious individuals transmit the disease to the susceptible population, and $\gamma$ is the rate at which infectious individuals recover.

Typically $\beta$ and $\gamma$ are assumed to be model constants.
However, there are situations where one may be able to temporarily alter these constants.
One example of this is the current COVID-19 outbreak.
Here non-pharmaceutical interventions such as quarantines and social distancing were employed to temporarily reduce the transmission rate (see for instance~\cite{FergusonLaydonEA20,Rampini20,MaierBrockmann20,LaaroussiRachik20}).

In order to study this scenario, we assume that the transmission rate~$\beta$ is piecewise constant, and can take on one of two values: the normal transmission rate, $\beta_n$, and a reduced transmission rate, $\beta_q < \beta_n$, when quarantines / social distancing measures are in effect.
While these measures greatly reduce and may even completely stop the spread of the outbreak, for societal reasons one may not be able to impose them for extended periods of time.
This leads to a natural and interesting mathematical question:
\begin{quote}\itshape
  Given a fixed limit $T$ on the length of time social distancing / quarantines may be imposed, how should they be scheduled in order to minimize the total number of fatalities?
  Should the social distancing / quarantines be imposed in one contiguous interval, or broken up into multiple intervals?
  Should it be imposed early, when very few individuals are infected, or later when the infection levels are higher?
\end{quote}

To study this mathematically, we assume that a constant fraction of individuals who contract the disease will die.%
\footnote{%
  While this assumption is used in many situations, it does not always apply.
  For instance, during the COVID-19 pandemic the fatality rate was roughly constant when the number of infected individuals was small.
  However, when this number increased beyond the health-care capacity, the fatality rate almost doubled.}
In this case, minimizing the total number of fatalities is equivalent to minimizing $R(\infty) = \lim_{t \to \infty} R(t)$. 
Consequently, we will formulate all our results directly in terms of~$R(\infty)$. We remark that $R(\infty)=1-S(\infty)$ without social distancing / quarantines can be computed by the conservation of $I+S-\frac{\gamma}{\beta_n}\log S$ (see the Proof of Lemma~\ref{l:intCond}).

Formally, in equations~\eqref{e:S}--\eqref{e:R} the set of times when social distancing / quarantines are in effect may be an arbitrary measurable set.
However, it is only practical to impose and lift quarantines finitely many times, and thus we restrict our attention to this situation.
The main result of this paper shows that in order to minimize $R(\infty)$, it is always better to impose social distancing / quarantines in a single contiguous window of time, as opposed to splitting it up into multiple intervals (of the same total length).
Moreover, the best time window to impose social distancing / quarantines is often close to the time when the infection peaks, and we characterize this time window  analytically.
This is stated precisely below.

\begin{theorem}\label{t:main}
  Fix $T > 0$ and let $\mathcal T$ be the collection of all sets $\tau \subseteq [0, \infty)$ such that $\tau$ is a finite union of intervals with total length~$T$.
  Given $\tau \in \mathcal T$ define $\beta^\tau \colon [0, \infty) \to \R$ by
  \begin{equation*}
    \beta^\tau(t) =
      \begin{cases}
	\beta_q	& t \in \tau\,,\\
	\beta_n	& t \not\in \tau \,,
      \end{cases}
  \end{equation*}
  for some constants $0 < \beta_q < \beta_n$, and $\gamma > 0$.
  Let $S^\tau$, $I^\tau$, $R^\tau$ be the solution to
  \begin{equation}\label{e:SIRtau}
      \partial_t S^\tau = -\beta^\tau S^\tau I^\tau\,,
      \qquad
      \partial_t I^\tau = \beta^\tau S^\tau I^\tau-\gamma I^\tau \,,
      \qquad
      \partial_t R^\tau = \gamma I^\tau \,,
  \end{equation}
  with fixed initial data $I^\tau(0) = I_0 \in (0, 1)$, $S^\tau(0) = 1 - I_0$, $R^\tau(0) = 0$.
  Then, the set of times $\tau \in \mathcal T$ that minimizes $R^\tau(\infty)$ is always a single continuous interval of length $T$, and at least one of the following hold:
  \begin{enumerate}
    \item The minimizing interval $\tau$ is $[0, T]$.
    \item
      The minimizing interval~$\tau$ is characterized by the integral condition
      \begin{equation}\label{e:minCond}
	\int_\tau \frac{\gamma - \beta_n S^\tau}{I^\tau } \, dt = 0\,.
      \end{equation}
  \end{enumerate}
  If $\beta_n\leq \gamma$, then the first case above always holds.
  If instead $\beta_n > \gamma$, then there exists $\epsilon_0 > 0$ such that  the second case above holds for all $I_0 \in (0, \epsilon_0)$.
\end{theorem}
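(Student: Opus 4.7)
My plan has three stages: reduce the problem to an integral maximization via a conservation law, apply Pontryagin's maximum principle to show the optimal $\tau$ is a single interval, and finally characterize this interval via a direct variational calculation. For the reduction, the function $\phi_n(S, I) := S + I - (\gamma/\beta_n)\log S$ is conserved under normal dynamics, and a direct computation gives $\partial_t \phi_n^\tau = \gamma I^\tau(\beta^\tau/\beta_n - 1)$. Integrating from $0$ to $\infty$ and using $I^\tau(\infty) = 0$ yields
\[
\phi_n(S^\tau(\infty), 0) = \phi_n(1 - I_0, I_0) - c\int_\tau I^\tau \,dt, \qquad c := \gamma(1 - \beta_q/\beta_n) > 0.
\]
Since $x \mapsto x - (\gamma/\beta_n)\log x$ is strictly decreasing on $(0, \gamma/\beta_n)$ and a standard monotonicity argument shows $S^\tau(\infty) < \gamma/\beta_n$, minimizing $R^\tau(\infty)$ is equivalent to maximizing $F(\tau) := \int_\tau I^\tau \,dt$.

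For single-interval optimality, I would apply Pontryagin's maximum principle with binary control $u = \mathbf{1}_\tau$ and the length constraint $\int u\,dt = T$ handled by a Lagrange multiplier $\mu$. The adjoint $(p_S, p_I)$ satisfies the standard terminal conditions $p_S(\infty) = -1$, $p_I(\infty) = 0$, and the switching function is
\[
\varphi(t) := \Delta\beta\,S^\tau(t) I^\tau(t)\,\tilde q(t) - \mu, \qquad \tilde q := p_I - p_S,
\]
with $t \in \tau$ iff $\varphi(t) > 0$. A direct computation using all four ODEs yields the clean identity $\dot\varphi(t) = \Delta\beta\,\gamma\,S^\tau(t) I^\tau(t)\,p_S(t)$. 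Since $\dot p_S = -\beta^\tau I^\tau\tilde q$ and $\tilde q(\infty) = 1$, establishing $\tilde q > 0$ throughout (from the terminal data and the coupled structure of the adjoint) forces $p_S$ to be strictly monotone in $t$ and thus to change sign at most once. Consequently $\varphi$ is unimodal and $\{\varphi > 0\}$ is a single interval.

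For the integral condition, write $\tau = [a, a+T]$ and use conservation of $\phi_q(S, I) := S + I - (\gamma/\beta_q)\log S$ during quarantine to obtain $F(a) = (1/\beta_q)\log[S^\tau(a)/S^\tau(a+T)]$. Differentiate in $a$, using $S^\tau(a) = S_n(a)$ (the pre-quarantine normal trajectory) and computing $dS^\tau(a+T)/da$ by implicit differentiation of the length-$T$ constraint along the $\phi_q$-level curve; then the identity $\partial_t(1/I^\tau) = (\gamma - \beta_q S^\tau)/I^\tau$ on $\tau$ rewrites $1/I^\tau(a+T) - 1/I^\tau(a)$ in a form where, after a short algebraic manipulation, $dF/da = 0$ becomes equivalent to $\int_\tau (\gamma - \beta_n S^\tau)/I^\tau\,dt = 0$. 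For the remaining assertions: if $\beta_n \leq \gamma$, then $\gamma - \beta_n S^\tau \geq \gamma - \beta_n(1 - I_0) > 0$ everywhere, so the integral is strictly positive and only the boundary case $a = 0$ can occur. If $\beta_n > \gamma$ and $I_0$ is small, the natural infection peaks at a large time $t^*(I_0) \to \infty$, so $F(a)$ is small at $a = 0$ (since $I^\tau$ is small throughout $[0, T]$) and small as $a \to \infty$, but much larger near $a = t^*$; by the intermediate value theorem an interior critical point exists. The main obstacle I expect is Step 2, specifically verifying the positivity of $\tilde q$, which drives the unimodality of $\varphi$ and hence the single-interval conclusion.
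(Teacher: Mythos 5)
Your route is genuinely different from the paper's, and it's worth comparing the two. Your opening reduction — that $\partial_t(S + I - \tfrac{\gamma}{\beta_n}\log S) = \gamma I(\beta^\tau/\beta_n - 1)$, hence minimizing $R^\tau(\infty)$ is equivalent to maximizing $\int_\tau I^\tau\,dt$ — is correct and is a cleaner global statement than anything the paper writes down explicitly (the paper works with the same conserved quantity but only locally, tracking the constant $c_\tau$ along level curves $\Gamma_c$). However, the core of your single-interval argument rests on Pontryagin's maximum principle, and this is where the gaps are. First, as you yourself flag, the unimodality of the switching function $\varphi$ requires $\tilde q = p_I - p_S$ to keep a fixed sign, and the coupled adjoint system $\dot p_S = -\beta I\tilde q$, $\dot p_I = -\beta S\tilde q + \gamma p_I$ does not make this obvious; one would need a comparison/Gronwall argument that you haven't supplied, and the terminal data you quote ($\tilde q(\infty) = 1$ with $p_S(\infty) = -1$, $p_I(\infty) = 0$) isn't sign-consistent with a straightforward reading of the maximization. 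Second — and this is a gap you don't mention — PMP gives necessary conditions for an optimum, but you have not established that an optimum exists in the class $\mathcal T$ of finite unions of intervals on the unbounded horizon $[0,\infty)$. If you enlarge to measurable $\{0,1\}$-controls to invoke Filippov-type existence, the optimal control is a priori only measurable, and you would then have to show it is in fact a finite union (indeed a single interval); if you don't enlarge, you have no compactness. The paper sidesteps both issues by never leaving finite dimensions: it parametrizes $m$-interval configurations as $\mathcal T_m \subset \mathbb R^{2m-1}$, proves continuity and a confinement estimate (Lemma~\ref{l:SIRbehavior}) plus a monotonicity-at-infinity lemma (Lemma~\ref{l:inc}) to get a compact minimizing set, and then uses a direct first-order shift computation (Lemma~\ref{l:intCond}) together with an order-preserving lemma (Lemma~\ref{l:orderpreserving}) to show the minimizer over $\bar{\mathcal T}_m$ lies on the boundary $\mathcal B_{m-1}$, inducting down to a single interval. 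Your Step 3 (differentiating $F(a)$ in the start time $a$ of a contiguous window) is essentially the paper's Lemma~\ref{l:intCond} in different notation and is fine in outline. Your treatment of $\beta_n \le \gamma$ is correct. For the small-$I_0$ case your IVT sketch points in the right direction, but the paper gets a concrete $\epsilon_0$ by directly bounding $S^\tau(t) \ge \gamma/\beta_n$ on $[0,T]$ and hence $Q(1-\epsilon,\epsilon,T) < 0$, which is sharper. In short: your reduction in Step 1 is a nice observation, but Step 2 contains two genuine unresolved gaps (sign of $\tilde q$, and existence plus finiteness of the optimizer) that the paper's elementary finite-dimensional induction avoids entirely.
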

\begin{remark*}
  From the proof we will see that the $\epsilon_0$ above can be estimated by
  \begin{equation*}
    \epsilon_0 \approx \frac{1}{
      \beta_q T \max\set{1, e^{(\beta_q - \gamma)T }}}
      \paren[\Big]{1 - \frac{\gamma}{\beta_n} }.
  \end{equation*}
\end{remark*}

Recall that the \emph{basic reproduction number}, denoted by $\mathcal R^n_0$, is defined to be the ratio $\beta_n / \gamma$.
When $\mathcal R^n_0 \leq 1$ the transmission rate is slower than the recovery rate,  and the infection doesn't spread.
In this case the fraction of the population that is infected decreases monotonically.
Theorem~\ref{t:main} states that the total number of infected people is minimized if social distancing / quarantines are imposed at time $t = 0$, and this is not unexpected.

The more interesting case above is when $\mathcal R^n_0 > 1$.
In this case $\beta_n > \gamma$, and the infection will spread through the population.
One might now wonder whether it is more advantageous to impose social distancing / quarantines early when very few people are infected, or if its better to wait until a larger fraction of the population is infected, or if one should split up the quarantine into many short intervals.
Theorem~\ref{t:main} guarantees that then the most effective fixed length quarantine is a always a single contiguous time interval.
Moreover, when the second assertion of Theorem~\ref{t:main} holds, this interval contains the time when the infection peaks.
To see this, 
note that equation~\eqref{e:minCond} and the fact that $S$ is decreasing implies that $\beta_n S^\tau - \gamma$ is positive at the start of $\tau$, and negative at the end of $\tau$.
Thus, from~\eqref{e:I} we see that the disease is spreading at the start of $\tau$, attains its peak sometime during the time interval~$\tau$, and is dying out at the end of~$\tau$.
Hence the time interval $\tau$ that minimizes $R^\tau(\infty)$ must include the point when the number of infected individuals attains its peak. (See Figure~\ref{f:IR} for a simulation illustrating this.)


We also remark that when $\beta_n>\gamma$ and $I_0\geq \epsilon_0$, either conclusion~(1) or~(2) in Theorem~\ref{t:main} may hold, and we can not determine which one.
It is easy to see that if the population already has herd immunity (i.e.\ $I_0 \geq 1 - \gamma / \beta^n = 1 - 1/\mathcal R_0$), then the first conclusion in Theorem~\ref{t:main} must necessarily hold.
When $I_0 \in (\epsilon_0, 1 - 1/\mathcal R_0)$ then either conclusion~(1) or~(2) may hold, and we can not apriori determine which.

\subsection*{Discussion and further questions}
Before proceeding with the proof of Theorem~\ref{t:main}, we now provide a brief summary of related results and open questions that merit further study.

First we note that Theorem~\ref{t:main} can be reformulated more generally as an optimal control problem.
Namely, consider the case where adjusting the severity of the quarantine results in a variable transmission rate $\beta = \beta(t)$.
There is however a social and economic cost associated to imposing a quarantine measures, and this cost increases with the severity of the quarantine.
Of course, not imposing a quarantine results in more infected individuals and there is a social and economic cost associated with their care.
Combining these, we can quantify the total cost over the course of the infection as
\begin{equation*}
  \mathcal C(t) \defeq \int_0^\infty \paren[\big]{
      c_q\paren[\big]{ \beta_n - \beta(t) }
      + c_i( I(t) ) 
    } \, dt\,,
\end{equation*}
where $c_q$ and $c_i$ are increasing functions representing the costs associated to imposing quarantines, and the care of infected individuals respectively.

One can now study how the cost function~$\mathcal C$ can be minimized, subject to various practical constraints.
The constraint we study in this paper
requires~$\beta$ to be piecewise constant, only take on the values $\beta_q$ or $\beta_n$ and~$\int_0^\infty (\beta_n - \beta) \, dt = T (\beta_n - \beta_q)$.
Under this constraint, Theorem~\ref{t:main} finds the optimal~$\beta$ minimizing the cost function~$\mathcal C$ with $c_q = 0$ and $c_i(x) = x$.

Another constraint studied by Miclo et\ al.\ \cite{MicloWeibullEA20} is to only consider solutions for which $I(t) \leq \bar I_0$, for some exogenously specified level~$\bar I_0 \in (0, 1]$.
Here $\bar I_0$ represents the health care capacity, above which the mortality rate may be dramatically higher.
Under this constraint with the cost functions~$c_q(x) = x^+$ and $c_i(x) = 0$, Miclo et\ al.\ \cite{MicloWeibullEA20} show that 
the quarantine policy that minimizes~$\mathcal C$ is one where the infection grows unchecked until~$I = \bar I_0$, after which one imposes a quarantine and adjusts the severity to hold $I(t) = \bar I_0$ until herd immunity is achieved.
Recently, due to the COVID19 pandemic, many authors have studied various other costs and policies
both numerically and analytically, and we refer the reader to~\cites{Behncke00,AlvarezArgenteEA20,KisslerTedijantoEA20,KruseStrack20,Toda20}.
\smallskip

Another aspect that merits further study is a spatially-dependent system considering diffusion and population demography.
In this case the SIR system becomes a family of reaction diffusion systems~\cite{FitzgibbonLanglaisEA01,FitzgibbonLanglaisEA04,LaaroussiRachik20}.
In this setting one may naturally formulate an analog of Theorem~\ref{t:main} with the additional spatial component: given an upper bound on the product of the total time the quarantine is imposed and the size of the region it is imposed on, what is the optimal quarantine policy that minimizes the total number of fatalities?
This, however, is much harder to analyze and depends intrinsically on the spatial geometry, and we do not know if there will be a simple description of the optimal quarantine policy.
\smallskip

A third most important factor not considered in this paper is that of heterogeneous populations.
In a large group of humans there are various factors (such as social habits, or inherent tolerance) that contribute towards variance of the population.
One accounts for this by using a heterogeneous SIR model which divides the population into several homogeneous groups.
%
Counter-intuitively, in this case, a more severe quarantine can result in a \emph{higher} fraction of the population being infected (see~\cite{BrittonBallEA20}); an effect that is impossible to observe in a homogeneous population.


Numerous authors (see for instance~\cites{ChikinaPegden20,Rampini20,AcemogluChernozhukovEA20}) have also observed numerically that for heterogeneous populations quarantine measures that are targeted to each group are an order of magnitude more effective than un-targeted ones.
Theorem~\ref{t:main} can again be naturally formulated in this setting.
The proof, however, does not generalize, and we presently are unable to analytically characterize the optimal quarantine strategy in this case.
\smallskip

Finally, we mention  one novel feature that is unique to the recent COVID19 outbreak: asymptomatic carriers -- individuals who transmit the disease but show no symptoms.
Modeling their behavior is a newly developing, active area of study and we refer the reader to~\cite{MaierBrockmann20,ChenLuEA20,GanyaniKremerEA20}.
At present we do not know how best to model their behavior and how to reformulate Theorem~\ref{t:main} to capture their effect.

\subsection*{Plan of this paper}
In Section~\ref{s:thmproof} we state two lemmas required to prove Theorem~\ref{t:main}, and prove Theorem~\ref{t:main} modulo these lemmas.
In Section~\ref{s:lemmas} we prove both these lemmas.
Finally, in Section~\ref{s:numerics} we perform a few numerical simulations to illustrate Theorem~\ref{t:main}.

\section{Proof of Theorem~\ref{t:main}.}\label{s:thmproof}

Our aim in this section is to prove Theorem~\ref{t:main}.
The first step is to restrict our attention to social distancing / quarantines imposed on a contiguous interval, and show that the condition~\eqref{e:minCond} is necessary.
Fix $S_0 \in (0, 1)$, and set $I_0 = 1 - S_0$.
Given any $\tau \in \mathcal T$, we will subsequently denote $S^\tau, I^\tau, R^\tau$ to be the solution to~\eqref{e:SIRtau} with initial data $S^\tau(0) = S_0$, $I^\tau(0) =  I_0$, $R^\tau(0) = 0$.

Given any $S_0, I_0 \in (0, 1)$ with $S_0 + I_0 \leq 1$, define
\begin{equation}\label{eq:defQ}
  Q(S_0, I_0, T) = \int_0^T \frac{\gamma - \beta_n S_q(t)}{I_q(t)} \, dt \,,
\end{equation}
where $S_q, I_q$ solve~\eqref{e:S}--\eqref{e:I} with $\beta = \beta_q$ and initial data $S_q(0) = S_0$ and $I_q(0) = I_0$.
The necessity of~\eqref{e:minCond} for contiguous intervals $\tau$ can now be stated as follows.

\begin{lemma}\label{l:intCond}
  Let $t_0 \geq 0$ and $\tau = [t_0, t_0 + T]$.
  \begin{enumerate}
    \item
      Suppose $Q(S^\tau(t_0), I^\tau(t_0), T) > 0$ and $t_0 > 0$.
      Given $\delta \in (0, t_0)$, define $\sigma =  \sigma_\delta = [t_0 - \delta, t_0+T - \delta]$.
      Then, for all sufficiently small $\delta$, we must have $R^\sigma(\infty) < R^\tau(\infty)$.
      Moreover, if $S^{\tau}(t)=S^{\sigma}(t')$ for some $t>t_0+T$, $t'>t_0+T-\delta$ and sufficiently small~$\delta$, then we must have $I^{\sigma}(t')<I^{\tau}(t)$.

    \item
      On the other hand, suppose $Q(S^\tau(t_0), I^\tau(t_0)) < 0$.
      Now given any $\delta > 0$, define $\sigma = \sigma_\delta = [t_0 + \delta, t_0 + \delta + T]$.
      Then, for all sufficiently small $\delta$, we must have $R^\sigma(\infty) < R^\tau(\infty)$.
      Moreover, if $S^{\tau}(t)=S^{\sigma}(t')$ for some $t>t_0+T$ and $t'>t_0+T+\delta$ and sufficiently small~$\delta$, then we must have $I^{\sigma}(t')<I^{\tau}(t)$.
  \end{enumerate}
\end{lemma}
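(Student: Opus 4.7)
The plan is to convert the comparison of $R^\tau(\infty)$ into a comparison of integrals $\int_\tau I^\tau\,dt$, and then to compute the derivative of that integral as the quarantine interval is shifted in time. To this end I would introduce the Lyapunov-type function $V(S,I) \defeq I + S - (\gamma/\beta_n)\log S$, which is conserved by the $\beta_n$-dynamics. A direct computation gives $\partial_t V(S^\tau, I^\tau) = \gamma I^\tau(\beta^\tau - \beta_n)/\beta_n$, so $V$ is constant outside $\tau$ and decreases at rate $\gamma I^\tau(\beta_n - \beta_q)/\beta_n$ inside $\tau$. Integrating from $0$ to $\infty$ and using $I^\tau(\infty) = 0$ yields
\begin{equation*}
  V(S^\tau(\infty),0) = V(S_0, I_0) - \frac{\gamma(\beta_n - \beta_q)}{\beta_n}\int_\tau I^\tau(t)\, dt.
\end{equation*}
Since $V(\cdot, 0)$ is strictly decreasing on $(0, \gamma/\beta_n)$, and a standard phase-plane argument yields $S^\tau(\infty) < \gamma/\beta_n$, minimizing $R^\tau(\infty) = 1 - S^\tau(\infty)$ is equivalent to maximizing $J(\tau) \defeq \int_\tau I^\tau\, dt$.

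Next I would parametrize the shifted interval as $\tau_\alpha = [t_0+\alpha, t_0+\alpha+T]$ and study $J'(\alpha)$ at $\alpha=0$. Because $I^\tau\,dt = -d\log S^\tau/\beta_q$ inside the quarantine, one has the closed form
\begin{equation*}
  J(\tau_\alpha) = \frac{1}{\beta_q}\bigl(\log S^{\tau_\alpha}(t_0+\alpha) - \log S^{\tau_\alpha}(t_0+\alpha+T)\bigr).
\end{equation*}
The first logarithm is evaluated along the unperturbed pre-quarantine $\beta_n$-trajectory, so its $\alpha$-derivative at $\alpha=0$ is simply $-\beta_n I^\tau(t_0)$. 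The challenging term is the second one, since both the initial condition of the quarantine flow and the evaluation point of its time-$T$ map depend on $\alpha$; this is where essentially all of the work lies.

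For that variational analysis, set $p(u) \defeq \partial_\alpha S^{\tau_\alpha}(t_0+\alpha+u)|_{\alpha=0}$ and $q(u) \defeq \partial_\alpha I^{\tau_\alpha}(t_0+\alpha+u)|_{\alpha=0}$ for $u\in[0,T]$, and compute $p(0) = -\beta_n S^\tau(t_0) I^\tau(t_0)$, $q(0) = (\beta_n S^\tau(t_0) - \gamma) I^\tau(t_0)$ from the pre-quarantine dynamics. A short substitution into the variational ODE shows that $y(u) \defeq p(u)/S^\tau(t_0+u)$ satisfies $y'(u) = -\beta_q q(u)$, so
\begin{equation*}
  \beta_q J'(0) \;=\; y(0) - y(T) \;=\; -\beta_n I^\tau(t_0) - \frac{p(T)}{S^\tau(t_0+T)}.
\end{equation*}
To evaluate $p(T)$ I would exploit the conservation of $V_q \defeq I + S - (\gamma/\beta_q)\log S$ along the quarantine dynamics; differentiating this relation in $\alpha$ gives the linear constraint $(1 - \gamma/(\beta_q S^\tau))p + q = C_0$ with $C_0 = \gamma I^\tau(t_0)(\beta_n - \beta_q)/\beta_q$. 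Substituting this into the $p$-equation reduces the system to a scalar linear ODE whose homogeneous solution is $S^\tau I^\tau$, so variation of parameters gives $p(T)$ explicitly in terms of $\int_0^T du/I^\tau(t_0+u)$. Inserting this into the expression for $\beta_q J'(0)$ and using the pointwise identity $\partial_u(1/I^\tau) = (\gamma - \beta_q S^\tau)/I^\tau$ to combine the resulting integrals, a short algebraic simplification produces the key identity
\begin{equation*}
  J'(0) \;=\; -\,I^\tau(t_0)\, I^\tau(t_0+T)\, Q(S^\tau(t_0), I^\tau(t_0), T).
\end{equation*}

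Both cases of the lemma then follow at once: since $I^\tau > 0$ throughout, the sign of $J'(0)$ is opposite to that of $Q$, so the shift prescribed in each case strictly increases $J$ and hence strictly decreases $R(\infty)$ for all sufficiently small $\delta > 0$. For the ``moreover'' claim, both solutions lie on level sets of $V$ after their respective quarantines have ended, and the preceding argument shows that these level values satisfy $V^\sigma(\infty) < V^\tau(\infty)$. Writing $I = V(\infty) - S + (\gamma/\beta_n)\log S$ on each post-quarantine level set then yields $I^\sigma(t') - I^\tau(t) = V^\sigma(\infty) - V^\tau(\infty) < 0$ whenever $S^\sigma(t') = S^\tau(t)$. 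The principal obstacle is the derivation of the closed-form expression for $J'(0)$: matching the resulting combination of terms to the integral defining $Q$ requires simultaneously using the $V_q$-conservation in the linearized system and the differential identity for $1/I^\tau$.
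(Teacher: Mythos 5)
Your proof is correct, and the computation is organized genuinely differently from the paper's even though both ultimately track the Lyapunov level $V = I + S - (\gamma/\beta_n)\log S$ through the quarantine window. The paper differentiates the post-quarantine level $c_\tau$ directly with respect to the shift $\delta$: it Taylor-expands the implicit time-$T$ constraint
\begin{equation*}
  T = \int_{S^\sigma_1}^{S^\sigma_0}\frac{ds}{\beta_q\, s\,\bigl(g_q(s)+I^\sigma_0-g_q(S^\sigma_0)\bigr)}
\end{equation*}
to extract $\Delta S = S^\sigma_1 - S^\tau_1$ and thence $c_\sigma - c_\tau$. You instead observe first that $V(S^\tau(\infty),0) = V(S_0,I_0) - \tfrac{\gamma(\beta_n-\beta_q)}{\beta_n}\int_\tau I^\tau\,dt$, converting the problem to maximizing $J(\tau)=\int_\tau I^\tau\,dt$, and then compute $J'(0)$ via the variational system $(p,q)$ along the quarantine flow: the substitution $y=p/S$ decouples one equation, the $V_q$-conservation supplies the linear constraint $(1-\gamma/(\beta_q S))p+q\equiv C_0$ with $C_0=\gamma I^\tau(t_0)(\beta_n-\beta_q)/\beta_q$, and the homogeneous solution $S^\tau I^\tau$ closes the computation by variation of parameters. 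Both routes land on the identical identity
\begin{equation*}
  J'(0) = -\,I^\tau(t_0)\, I^\tau(t_0+T)\, Q\bigl(S^\tau(t_0),I^\tau(t_0),T\bigr),
\end{equation*}
which is precisely the paper's~\eqref{e:ctildemc} after dividing out $\delta$ and the positive constant $\beta_q(\rho_q-\rho_n)=\gamma(\beta_n-\beta_q)/\beta_n$. Your ``moreover'' step --- once each quarantine ends the solutions lie on $V$-level sets, so $I^\sigma(t')-I^\tau(t)=c_\sigma-c_\tau<0$ whenever $S^\sigma(t')=S^\tau(t)$ --- is the paper's phase-plane argument (``the curve must lie below''), written algebraically. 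Two advantages of your framing: the reformulation $R^\tau(\infty)\leftrightarrow J(\tau)$ makes the target quantity transparent at the outset, and the single sign of $J'(0)$ disposes of both cases of the lemma at once, whereas the paper proves case~(1) and asserts case~(2) is analogous.
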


Next we show that $R^\tau(\infty)$ attains a minimum, and this minimum is attained when $\tau$ is a single contiguous interval.
Note that the set of all $\tau \in \mathcal T$ consisting of $m$ disjoint intervals can be identified with the set $\mathcal T_m \subseteq \R^{2m-1}$ defined by
\begin{equation}\label{e:tauTm}
  \mathcal T_m = \set[\Big]{ (t_1, \ell_1, \dots, t_{m-1}, \ell_{m-1}, t_m) \st 
    0 \leq t_i < t_i + \ell_i < t_{i+1},\
    \sum_{i=1}^{m-1} \ell_i < T
  }\,.
\end{equation}
Indeed, we identify the ordered tuple $(t_1, \ell_1, \dots, t_{m-1}, \ell_{m-1}, t_m)$ with the set $\tau \subseteq [0, \infty)$ defined by
\begin{equation*}
  \tau = \paren[\Big]{\bigcup_1^{m-1} [t_i, t_i + \ell_i]}
    \cup \brak[\Big]{ t_m, T - \sum_{j=1}^{m-1} \ell_j } \,.
\end{equation*}

Let $\bar{\mathcal T}_m$ denote the closure of $\mathcal T_m \subseteq \R^{2m -1}$, and define $\mathcal B_{m-1} = \bar{\mathcal T}_m - \mathcal T_m$.
Note that through the above identification, the set $\mathcal B_{m-1}$, represents a set of times $\tau \in \mathcal T$ with $m - 1$ (or less) disjoint intervals of total length $T$.
We will now show that even though $\bar{\mathcal T}_m$ is an unbounded set, the function $\tau \mapsto R^\infty(\tau)$ attains a minimum on $\bar{\mathcal T}_m$, and this minimum must be attained on $\mathcal B_{m-1}$.
\begin{lemma}\label{l:infTm}
  If $m > 1$, then the infimum of $R^\tau(\infty)$ over all $\tau \in \bar{\mathcal T}_m$ is attained at some point $\tau \in \mathcal B_{m-1}$.
\end{lemma}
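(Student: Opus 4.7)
The plan is to combine a compactness argument for attainment with a boundary-reduction argument based on Lemma~\ref{l:intCond}. First, I would verify that $\tau \mapsto R^\tau(\infty)$ is continuous on $\bar{\mathcal T}_m$, viewed as a closed subset of $\R^{2m-1}$ via the parameterization~\eqref{e:tauTm}. Continuity follows from continuous dependence of the ODE~\eqref{e:SIRtau} on its piecewise-constant switching times, and $R^\tau(\infty)$ is well-defined because the quasi-conservation of $I + S - (\gamma/\beta)\log S$ on each maximal constant-$\beta$ piece forces $I^\tau(t) \to 0$.

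Next, I would show that minimizing sequences stay in a compact subset, so the infimum is attained. The only source of unboundedness in $\bar{\mathcal T}_m$ is divergence of some starting time $t_j$; but if $t_j \to \infty$ along a minimizing sequence, the $j$th quarantine interval lands where $I^{\tau_n}$ has decayed to nearly zero and is wasted. Transferring its length $\ell_j$ onto an adjacent, still-finite interval produces a configuration in $\mathcal B_{m-1}$ with strictly smaller $R(\infty)$, contradicting minimality. (In the trivial regime $\beta_n \le \gamma$, where $I^\tau$ is monotonically decreasing from $t=0$, the argument simplifies since any additional quarantine time strictly reduces $R(\infty)$.) Hence the infimum is attained at some $\tau^* \in \bar{\mathcal T}_m$.

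The main obstacle is then to rule out $\tau^* \in \mathcal T_m$. Suppose for contradiction that $\tau^*$ has $m$ strictly disjoint intervals $I_1 = [a_1,b_1], \dots, I_m = [a_m,b_m]$ separated by positive gaps. Applying Lemma~\ref{l:intCond} to each interval $I_j$ in isolation---treating the state $(S^{\tau^*}(t_j), I^{\tau^*}(t_j))$ at its starting time as fresh initial data and the remaining intervals as a fixed ambient schedule---forces the integral condition $Q_j = \int_{I_j} (\gamma - \beta_n S^{\tau^*})/I^{\tau^*}\,dt = 0$ for every $j$, since otherwise an infinitesimal shift of $I_j$ would strictly decrease $R^\tau(\infty)$. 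To complete the argument, I would collapse the gap between $I_1$ and $I_2$ by shifting $I_2$ leftward by $g = a_2 - b_1$, producing $\tilde\tau \in \mathcal B_{m-1}$ of the same total length $T$. On the collapsed gap $(b_1, a_2)$, $\tilde\tau$ uses $\beta_q$ in place of $\beta_n$, and a direct ODE comparison immediately yields $S^{\tilde\tau}(a_2) > S^{\tau^*}(a_2)$ together with $I^{\tilde\tau}(a_2) < I^{\tau^*}(a_2)$. Using the monotonicity statement in the second part of Lemma~\ref{l:intCond}---that after a leftward shift the post-quarantine $I$ is strictly smaller at matched values of $S$---this advantage propagates through the remaining intervals of $\tau^*$, giving $S^{\tilde\tau}(\infty) > S^{\tau^*}(\infty)$ and hence $R^{\tilde\tau}(\infty) < R^{\tau^*}(\infty)$, contradicting the minimality of $\tau^*$. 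Therefore $\tau^* \in \mathcal B_{m-1}$.
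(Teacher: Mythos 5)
Your compactness and attainment step is in the right spirit --- the paper formalizes it with Lemma~\ref{l:inc}, which uses Lemma~\ref{l:SIRbehavior} to produce a fixed time $T_*$ after which $S^\tau < \gamma/\beta_n$, so any late quarantine interval can be shifted left to strictly decrease $R(\infty)$; combined with continuity (Lemma~\ref{l:SIRbehavior}), this confines the minimization to a compact set. But the boundary-reduction argument has two genuine gaps.

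First, the claim that Lemma~\ref{l:intCond} forces $Q_j = 0$ for \emph{every} interval $I_j$ of the minimizer is not a valid application of that lemma, and the conclusion is in fact impossible. Lemma~\ref{l:intCond} computes the change in $R^\tau(\infty)$ when a quarantine interval is shifted \emph{and no further quarantine intervals follow}: the calculation tracks the phase-plane level $c_\sigma - c_\tau$ through the shifted window and then uses that, afterward, the trajectory stays on a single curve $\Gamma_c$ until $t = \infty$. If another quarantine interval follows, shifting $I_j$ perturbs the entry state into $I_{j+1}$, and Lemma~\ref{l:intCond} says nothing about the resulting change in $R^\tau(\infty)$; this is exactly why the paper has Lemma~\ref{l:lastintervalCond}, which applies Lemma~\ref{l:intCond} only to the \emph{last} interval and yields $Q_m = 0$. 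Moreover, since $S^\tau$ is strictly decreasing, $\gamma - \beta_n S^\tau$ changes sign at most once, so $Q_j = 0$ (which requires a sign change inside $I_j$) can hold for at most one $j$. The paper exploits precisely this: $Q_m = 0$ forces $S^\tau > \rho_n$ on the preceding interval, hence $Q_{m-1} < 0$, which is the lever for the contradiction.

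Second, and more seriously, the assertion that the advantage $S^{\tilde\tau}(a_2) > S^{\tau^*}(a_2)$, $I^{\tilde\tau}(a_2) < I^{\tau^*}(a_2)$ ``propagates through the remaining intervals'' is precisely the step that must be proved and is \emph{false} without extra hypotheses. The system~\eqref{e:SIRtau} is non-autonomous (the switching is at fixed times, not at fixed phase-plane positions), so a trajectory that enters a quarantine window below another can exit above it; the paper gives an explicit counterexample in the remark following Lemma~\ref{l:orderpreserving}. The ``Moreover'' clauses of Lemma~\ref{l:intCond} that you invoke only compare $I$ at matched $S$ on the common post-quarantine curve $\Gamma_c$, \emph{after} the last quarantine has ended; they do not control the crossing of a further quarantine window. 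The paper's fix is Lemma~\ref{l:orderpreserving}, which establishes order preservation across the final quarantine interval \emph{under the extra hypothesis} $Q = 0$ at its start --- available because $Q_m = 0$ --- and is applied after an \emph{infinitesimal} rightward shift of the $(m-1)$th interval (so the ``Moreover'' part of Lemma~\ref{l:intCond} gives $I^{\sigma'}(t_m') < I^{\tau'}(t_m)$ at matched $S$), not a finite collapse of the gap. Your finite merge of $I_1$ and $I_2$ leaves the perturbative regime of all the available lemmas and has no order-preservation statement to appeal to.
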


Momentarily postponing the proofs of Lemmas~\ref{l:intCond} and~\ref{l:infTm}, we prove Theorem~\ref{t:main}.
\begin{proof}[Proof of Theorem~\ref{t:main}]
  Note that $\mathcal T$ can be viewed as an increasing union of the $\mathcal T_m$'s.
  By repeatedly applying Lemma~\ref{l:infTm}, we see that for any $m \geq 1$, the minimizer of $R^\tau(\infty)$ over all $\tau \in \mathcal T$ consisting of $m$ intervals or less must be attained when $\tau$ is a single contiguous interval.
  In this case, Lemma~\ref{l:intCond} forces the condition~\eqref{e:minCond} to be satisfied, unless $\tau = [0, T]$.
  This proves that either assertion~(1) or assertion~(2) in Theorem~\ref{t:main} must hold.

  For the last part of the theorem, suppose first $\beta_n \leq \gamma$.
  Since $S^\tau < 1$ and $I^\tau > 0$ this forces $Q(S^{\tau}(t_0), I^{\tau}(t_0), T)> 0$ for all $t_0\geq 0$.
  Thus condition~\eqref{e:minCond} can not be satisfied by any interval $\tau \in \mathcal T$, and hence the first assertion of Theorem~\ref{t:main} must hold.

  Finally, it only remains to show that when $\beta_n > \gamma$, there exists $\epsilon_0 > 0$ such that if $I(0) \in (0, \epsilon_0)$ then~\eqref{e:minCond} holds for the minimizing interval~$\tau$.
  Since we already know that one of the two conclusions~(1) or~(2) in Theorem~\ref{t:main} must hold, it suffices to show that the conclusion~(1) does not hold.
  To do this, by Lemma~\ref{l:intCond} it suffices to show that $Q(1 - \epsilon, \epsilon, T ) < 0$ for all $\epsilon \in (0, \epsilon_0)$.

  To see $Q(1 - \epsilon, \epsilon, T) < 0$, observe that~\eqref{e:I} implies
  \begin{equation*}
    I^{\tau}(t)=\epsilon \exp\paren[\Big]{
      \int_0^t(\beta_q S^\tau(t) -\gamma) \,ds}
    \leq \epsilon \max \set[\big]{ 1, e^{(\beta_q-\gamma) T} } \,,
  \end{equation*}
  for all $t \leq T$.
  Consequently,
  \begin{equation*}
    S^{\tau}(t)=(1-\epsilon)e^{-\beta_q\int_0^t I(s)ds}
      \geq (1-\epsilon)e^{-\beta_q \epsilon T \max \set{ 1, e^{(\beta_q-\gamma) T}}} \,,
  \end{equation*}
  for all $t \leq T$.
  Since $\gamma / \beta_n < 1$ by assumption, the above implies that $S^\tau(t) \geq \gamma / \beta_n$ for all $t \in \tau$, provided $\epsilon_0$ is sufficiently small.
  This forces~$Q(1 - \epsilon, \epsilon, T) < 0$, concluding the proof of Theorem~\ref{t:main}.
\end{proof}

\section{Proof of Lemmas}\label{s:lemmas}

This section is devoted to the proofs of Lemmas~\ref{l:intCond} and~\ref{l:infTm}.
We begin with Lemma~\ref{l:intCond}.

\begin{proof}[Proof of Lemma~\ref{l:intCond}]
Note that as $t \to \infty$, $I^\tau(t) \to 0$, and hence $R^\tau(\infty) = 1 - S^\tau(\infty)$.
  Since $S^\tau + I^\tau + R^\tau = 1$, minimizing $R^\tau(\infty)$ is the same as maximizing $S^\tau(\infty)$.
  In order to do this we study the behavior of $S^\tau$ as a function of $I^\tau$.
  Note first that when $\beta, \gamma$ are constants, solutions to~\eqref{e:S}--\eqref{e:I} conserve the quantity
  \begin{equation*}
    I + S - \frac{\gamma}{\beta} \log S \,.
  \end{equation*}
  This can readily be checked by differentiating and checking $\partial_t (I+S-\frac{\gamma}{\beta}\log S) = 0$.
  Thus, when no quarantine is imposed, one can compute $S(\infty)$ by solving the transcendental equation
  \begin{equation*}
    S(\infty) - \frac{\gamma}{\beta} \log S(\infty) = 1 - \frac{\gamma}{\beta} \log S_0\,.
  \end{equation*}

  In our case $\beta$ is not constant and there is no such explicit equation determining~$S^\tau(\infty)$.
  However, $\beta$ is piecewise constant, and so $I^\tau + S^\tau - \frac{\gamma}{\beta_n} \log S^\tau$ must be constant on every connected component of the complement of~$\tau$.
  Hence, we consider the family of curves $\mathcal C = \set{\Gamma_c \st c \in \R}$, where
  \begin{equation*}
    \Gamma_c \defeq \set[\big]{ (S, I) \in [0, 1]^2 \st
      S + I - \rho_n \log S = c\,,\ 
      S + I \leq 1 }\,,
    \quad\text{and}\quad
    \rho_n \defeq \frac{\gamma}{\beta_n}\,.
  \end{equation*}
  Note $\rho_n$ above is simply the reciprocal of the basic reproduction number $\mathcal R_0 = \beta_n / \gamma$.

  \begin{figure}[hbt]
    \includegraphics[width=.7\linewidth]{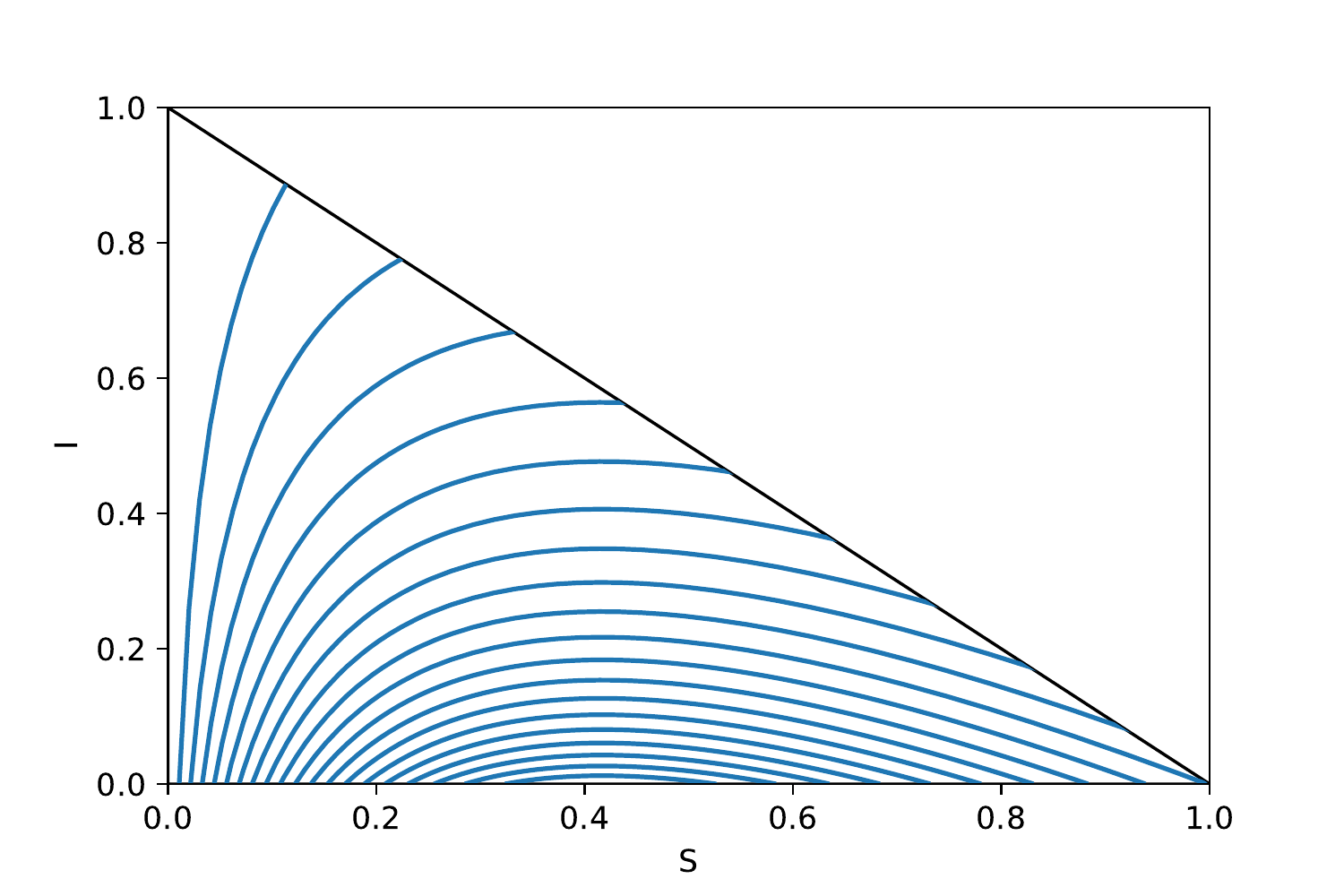}
    \caption{Various curves $\Gamma_c$ in the $S$-$I$ plane with $\mathcal R_0 = 2.4$. Only the portion of the curves that intersect the region $S \geq 0$, $I \geq 0$, $1 - S - I \geq 0$ are shown.}
    \label{f:GammaC}
  \end{figure}
  Each of the curves $\Gamma_c$ meet the line $I = 0$ at most twice (see Figure~\ref{f:GammaC}).
  The intersection when $S > \rho_n$ correspond to unstable equilibria, and so as $t \to \infty$, $(S^\tau(t), I^\tau(t))$ will approach some point $(S^\tau(\infty), 0)$ with $S^\tau(\infty) < \rho_n$.
  Thus, in order to maximize $S^\tau(\infty)$, we look for curves $\Gamma_c$ that meet the segment $\set{ I = 0,\ S \leq \rho_n}$ at an $S$-coordinate that is as large as possible.
  Implicitly differentiating $S - \rho_n \ln S = c$ we see that $\frac{dc}{dS} < 0$, and so smaller values of $c$ will lead to larger values of $S^\tau(\infty)$.
  \smallskip

  We will now prove the first assertion in Lemma~\ref{l:intCond}.
  The proof of the second assertion is similar.
  Choose $t_0 > 0$, assume $Q(S^\tau(t_0), I^\tau(t_0), T) > 0$ and let $\sigma = [t_0 - \delta, t_0 + T - \delta]$ for some small $\delta \in (0, t_0)$.
  For notational convenience, define
  \begin{align*}
    (S^\tau_0, I^\tau_0)
      &\defeq (S^{\tau}(t_0), I^{\tau}(t_0))\,,
      &
      (S^\tau_1,I^\tau_1) &\defeq (S^{\tau}(t_0+T), I^{\tau}(t_0+T))\,,
    \\
    (S^\sigma_0, I^\sigma_0)
      &\defeq (S^{\sigma}(t_0 - \delta), I^{\sigma}(t_0 -\delta))\,,
      &
      (S^\sigma_1,I^\sigma_1)
	&\defeq (S^{\sigma}(t_0+T-\delta), I^{\sigma}(t_0+T-\delta))\,,
  \end{align*}
  and let
  \begin{equation*}
    c_\tau \defeq S^\tau_1 + I^\tau_1 - \rho_n \ln S^\tau_1\,,
    \qquad
    c_\sigma \defeq S^\sigma_1 + I^\sigma_1 - \rho_n \ln S^\sigma_1\,.
  \end{equation*}
  We first claim
  \begin{equation}\label{e:ctildemc}
    c_\sigma - c_\tau
      = -\delta \beta_q (\rho_q - \rho_n) \, I^\tau_0 I^\tau_1 \, Q(S^\tau_0, I^\tau_0, T)
	+ O(\delta^2)\,.
  \end{equation}
  Once~\eqref{e:ctildemc} is established, our assumption on $Q$ implies $c^\sigma < c^\tau$.
  Using the argument in the previous paragraph, this in turn will imply $S^\sigma(\infty) > S^\tau(\infty)$ and hence $R^\sigma(\infty) < R^\tau(\infty)$ as desired.

  To prove~\eqref{e:ctildemc}, define the functions $g_n$ and $g_q$ by
  \begin{equation}\label{e:gdef}
    g_n(x) \defeq -x + \rho_n \log x
    \qquad\text{and}\qquad
    g_q(x) \defeq -x + \rho_q \log x\,.
  \end{equation}
  Using the fact that
  \begin{equation}
    \label{e:IeqgofS}
    I^\sigma(t) =  g_q(S^\sigma(t)) + I^\sigma_0 - g_q(S^\sigma_0)
    \quad\text{and}\quad
    I^\tau(t') =  g_q(S^\tau(t')) + I^\tau_0 - g_q(S^\tau_0) \,,
  \end{equation}
  for all $t \in \sigma$ and $t' \in \tau$,
  we note
  \begin{align}
    \nonumber
    c_\sigma - c_\tau
      &= I^\sigma_1 - g_n(S^\sigma_1)
	- \paren[\big]{ I^\tau_1 - g_n(S^\tau_1 ) }
    \\
    \nonumber
      &=
	I^\sigma_0 + (\rho_q - \rho_n) \log S^\sigma_1 - g_q( S^\sigma_0) 
	- \paren[\big]{
	  I^\tau_0 + (\rho_q - \rho_n) \log S^\tau_1 - g_q( S^\tau_0) 
	}
    \\
    \label{e:tildeCminusC1}
      &= (I^\sigma_0 - I^\tau_0)
	- (g_q(S^\sigma_0) - g_q(S^\tau_0) )
	+ (\rho_q - \rho_n) \paren[\big]{
	    \log S^\sigma_1 - \log S^\tau_1 } \,.
  \end{align}
  We now estimate each term on the right.

  The first two terms can be estimated quickly.
  Indeed equation~\eqref{e:SIRtau} shows
  \begin{equation}\label{e:perturbedstate}
    (S^{\sigma}_0,\ I^{\sigma}_0)
      =  (S^\tau_0,\ I^\tau_0) 
	+ \delta \paren[\Big]{
	\beta_n S^\tau_0 I^\tau_0,\ 
	\paren[\Big]{-1+ \frac{\rho_n}{S_0} }\beta_n S^\tau_0 I^\tau_0
      }
      +O(\delta^2)\,,
  \end{equation}
  and hence
  \begin{gather}
    \label{e:I0diff}
    I^\sigma_0 - I^\tau_0
      = \paren[\Big]{-1+ \frac{\rho_n}{S_0} }\beta_n S^\tau_0 I^\tau_0 \delta
	+ O(\delta^2)
    \\
    \label{e:gS0diff}
    g_q(S^\sigma_0) - g_q(S^\tau_0)
      = \paren[\Big]{-1 + \frac{\rho_q}{S^\tau_0}}
	\beta_n S^\tau_0 I^\tau_0
	\delta
	+ O(\delta^2)\,.
  \end{gather}

  The crux of the matter is the last term.
  For this, let $\Delta S = S^\sigma_1 - S^\tau_1$ and note that~\eqref{e:S} and~\eqref{e:IeqgofS} imply
  \begin{equation*}
    T = \int_{t_0}^{t_0 + T} \, dt
      = - \int_{t_0}^{t_0 + T} \frac{\partial_t S^\sigma}{\beta_q S^\sigma I^\sigma } \, dt
      = \int_{S^\sigma_1}^{S^\sigma_0}
	\frac{ds}{\beta_q s \paren{g_q(s) + I^\sigma_0 - g_q(S^\sigma_0)} }\,.
  \end{equation*}
  Using~\eqref{e:perturbedstate}--\eqref{e:gS0diff} and the above we see
  \begin{align*}
    \MoveEqLeft
    T = \int_{S^\sigma_1}^{S^\sigma_0}
      \frac{ds}{\beta_q s \paren{g_q(s) + I^\sigma_0 - g_q(S^\sigma_0)} }
    \\
    &= \int_{S^\tau_1 + \Delta S}^{S^\tau_0 + \delta \beta_n S^\tau_0 I^\tau_0}
      \frac{ds}{\beta_q s
	\paren{g_q(s) + I^\tau_0 - g_q(S^\tau_0)
	  - (\rho_q - \rho_n) \beta_n I^\tau_0 \delta
	} }
	+ O(\delta^2)
    \\
    &=
      \int_{S^\tau_1}^{S^\tau_0}
	\frac{ds}{\beta_q s
	  \paren{g_q(s) + I^\tau_0 - g_q(S^\tau_0) } }
      - \frac{\Delta S}{\beta_q S^\tau_1 I^\tau_1}
      + \frac{\delta \beta_n }{\beta_q}
    \\
      &\qquad
	+ \delta \int_{S^\tau_1}^{S^\tau_0}
	  \frac{(\rho_q - \rho_n) \beta_n I^\tau_0}{\beta_q s (g_q(s) + I^\tau_0 - g_q(S^\tau_0))^2} \, ds
	+ O(\delta^2)
  \end{align*}
  Using~\eqref{e:S} and~\eqref{e:IeqgofS} this simplifies to
  \begin{align*}
    T &= \int_{t_0}^{t_0 + T} \, dt
      - \frac{\Delta S}{\beta_q S^\tau_1 I^\tau_1}
      + \delta \paren[\Big]{
	  \frac{\beta_n }{\beta_q}
	  + (\rho_q - \rho_n) \beta_n I^\tau_0 \int_{t_0}^{t_0 + T} \frac{dt}{I^\tau(t)}
	}
      + O(\delta^2)\,,
  \end{align*}
  and hence
  \begin{equation}\label{e:DeltaS}
    \Delta S = \delta \beta_n S^\tau_1 I^\tau_1
      \paren[\Big]{
	1
	+ (\rho_q - \rho_n) \beta_q I^\tau_0
	  \int_{t_0}^{t_0 + T} \frac{dt}{I^\tau}
      }
      + O(\delta^2)\,.
  \end{equation}

  Now, using~\eqref{e:I0diff}, \eqref{e:gS0diff} and~\eqref{e:DeltaS} in~\eqref{e:ctildemc} we see
  \begin{align}
    \nonumber
    c_\sigma - c_\tau
      &= (\rho_q - \rho_n)\paren[\Big]{
	  \frac{\Delta S}{S^\tau_1}
	  - \beta_n I^\tau_0 \delta }
	+ O(\delta^2)
    \\
      \label{e:oneoverItau}
      &= \delta \beta_n (\rho_q - \rho_n) I^\tau_0 I^\tau_1
	\paren[\Big]{
	  \paren[\Big]{
	      \frac{1}{I^\tau_0} - \frac{1}{I^\tau_1}
	    }
	  + \beta_q (\rho_q - \rho_n) \int_{t_0}^{t_0 + T} \frac{dt}{I^\tau}
	}
	+ O(\delta^2)\,.
  \end{align}
  Since
  \begin{equation*}
    \frac{1}{I^\tau_0} - \frac{1}{I^\tau_1}
    = \int_{I^\tau_0}^{I^\tau_1} \frac{di}{i^2}
    = \int_{t_0}^{t_0 + T} \frac{\beta_q S^\tau - \gamma }{I^\tau } \, dt\,,
  \end{equation*}
  we see
  \begin{equation*}
    c_\sigma - c_\tau
      = \delta \beta_q ( \rho_q - \rho_n ) I^\tau_0 I^\tau_1
        \int_{t_0}^{t_0+T}
          \frac{
	    \beta_n S^\tau - \gamma
          }{I^\tau}
          \, dt
	+ O(\delta^2)\,,
  \end{equation*}
  proving~\eqref{e:ctildemc} as claimed.
  As explained earlier, this will prove~$R^\sigma(\infty) < R^\tau(\infty)$ as desired.

  It remains to show that if for some $t > t_0 + T$ and $t' > t_0 + T - \delta$ we have $S^\tau(t) = S^\sigma(t')$, then we must have $I^\sigma(t') < I^\tau(t)$.
  To see this, we consider the phase portrait the curve $I^\sigma$ vs $S^\sigma$ for times $t' \geq t_0 + T - \delta$, and phase portion of the curve $I^\tau$ vs $S^\tau$ for times $t \geq t_0 + T$.
  Since the times we consider are after the end of the intervals $\tau$ and $\sigma$, both these curves must be members of~$\mathcal C$.
  We already know $R^\sigma(\infty) < R^\tau(\infty)$, and hence $S^\sigma(\infty) > S^\tau(\infty)$.
  This means that in the $I$ vs $S$ plane, the curve parametrized by $(S^\sigma(t'), I^\sigma(t'))$ for $t' > t_0 + T - \delta$ must lie \emph{below} the curve parametrized by $(S^\tau(t), I^\tau(t))$ for $t > t_0 + T$.
  Thus if $S^\tau(t) = S^\sigma(t')$ for some $t > t_0 + T$, $t' > t_0 + T - \delta$, we must have $I^\sigma(t') < I^\tau(t)$.
  This finishes the proof.
\end{proof}

An immediate corollary to Lemma~\ref{l:intCond} is that if the minimizer~$\tau \in \mathcal T$ is not a contiguous interval, then the integral condition~\eqref{e:minCond} must be satisfied on the last contiguous interval in~$\tau$.

\begin{lemma}\label{l:lastintervalCond}
  Suppose $\tau = \bigcup_{i = 1}^m [t_i, t_i + \ell_i]$, with $0 < t_i < t_i + \ell_i < t_{i+1}$, and $\sum \ell_i = T$.
  Let $\tau' = \bigcup_{i = 1}^{m-1} [t_i, t_i + \ell_i]$, and $Q_m = Q( S^{\tau'}(t_m), I^{\tau'}(t_m), \ell_m )$.
  \begin{enumerate}
    \item
      If $Q_m > 0$ then there exists $\delta \in (0, t_m - t_{m-1} - \ell_{m-1})$ such that for
      \begin{equation*}
	\sigma = \tau' \cup [t_m - \delta, t_m - \delta + \ell_m]
      \end{equation*}
      we have $R^\sigma(\infty) < R^\tau(\infty)$.

    \item
      If $Q_m < 0$ then there exists $\delta > 0$ such that for
      \begin{equation*}
	\sigma = \tau' \cup [t_m + \delta, t_m + \delta + \ell_m]
      \end{equation*}
      we have $R^\sigma(\infty) < R^\tau(\infty)$.
  \end{enumerate}
\end{lemma}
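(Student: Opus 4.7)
The plan is to reduce Lemma~\ref{l:lastintervalCond} to the proof of Lemma~\ref{l:intCond}, since the only difference between $\tau$ and $\sigma$ is a small shift of the last quarantine interval and the calculation carried out in Lemma~\ref{l:intCond} is purely local to that interval.

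First I would note that $\tau$ and $\sigma$ coincide on $[0, t_m - \delta]$ in case~(1) and on $[0, t_m]$ in case~(2), so $(S^\sigma, I^\sigma) = (S^\tau, I^\tau) = (S^{\tau'}, I^{\tau'})$ throughout that common segment. In case~(1), the hypothesis $\delta < t_m - t_{m-1} - \ell_{m-1}$ guarantees that $\beta^{\tau'} = \beta_n$ on all of $[t_m - \delta, t_m]$, so $(S^\sigma(t_m - \delta), I^\sigma(t_m - \delta))$ is exactly the $O(\delta)$ perturbation of $(S^\tau(t_m), I^\tau(t_m))$ recorded in~\eqref{e:perturbedstate}. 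In case~(2), the analogous one-sided statement at $t_m$ holds with no upper bound on $\delta$ needed beyond smallness.

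Next I would observe that once the shifted last interval has ended, $\beta \equiv \beta_n$ under both schedules, so $I + S - \rho_n \log S$ is conserved on both tails. Denoting these conserved tail values by $c_\tau$ and $c_\sigma$, the monotone dependence of $S(\infty)$ on $c$ along the curves $\Gamma_c$ (established in the proof of Lemma~\ref{l:intCond}) reduces the problem to proving $c_\sigma < c_\tau$. The local computation from the proof of Lemma~\ref{l:intCond} now transfers verbatim with $t_0$ replaced by $t_m$ and $T$ replaced by $\ell_m$, since that computation only uses endpoint values of the quarantine interval, the $\beta_q$-dynamics on its interior, and the perturbation formula~\eqref{e:perturbedstate} at the left endpoint. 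It yields
\begin{equation*}
  c_\sigma - c_\tau = \mp\, \delta\, \beta_q\, (\rho_q - \rho_n)\, I^\tau(t_m)\, I^\tau(t_m + \ell_m)\, Q_m + O(\delta^2),
\end{equation*}
with the $-$ sign in case~(1) and the $+$ sign in case~(2). Since $\rho_q > \rho_n$ and the $I$-values are positive, $c_\sigma - c_\tau < 0$ for sufficiently small $\delta > 0$ in each case, giving $S^\sigma(\infty) > S^\tau(\infty)$ and hence $R^\sigma(\infty) < R^\tau(\infty)$.

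The main obstacle here is only bookkeeping: one must read through the proof of Lemma~\ref{l:intCond} and confirm that no step uses global information about the trajectory prior to $t_0$, so that inserting the non-trivial $\tau'$-evolution on $[0, t_m]$ in place of the single-interval evolution leaves the entire calculation intact.
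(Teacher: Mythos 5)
Your proposal is correct and is essentially the paper's own argument: the paper simply applies Lemma~\ref{l:intCond} with $T$ replaced by $\ell_m$ and with initial data $(S^\tau(t_{m-1}+\ell_{m-1}), I^\tau(t_{m-1}+\ell_{m-1}))$, remarking that the normalization $R^\tau(0)=0$ was never used in that lemma's proof. Your more detailed account---that the two schedules agree before the shifted interval, that the constraint $\delta < t_m - t_{m-1} - \ell_{m-1}$ keeps the shift away from the preceding quarantine interval, and that the Taylor-expansion computation of $c_\sigma - c_\tau$ is local to the last interval---is exactly the bookkeeping that justifies the paper's one-line reduction.
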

\begin{proof}
  Applying Lemma~\ref{l:intCond} with $T = \ell_m$ with initial data~$S^\tau(t_{m-1} + \ell_{m-1})$, $I^\tau(t_{m-1} + \ell_{m-1})$ immediately yields Lemma~\ref{l:lastintervalCond}.
  (Note, while the convention $R^\tau(0) = 0$ was used throughout Section~\ref{s:thmproof}, it is not required for Lemma~\ref{l:intCond}, and was not used in the proof of Lemma~\ref{l:intCond}.
  Thus our application of Lemma~\ref{l:intCond} above is valid.)
\end{proof}

Our next result establishes an ``order preserving'' property of solutions to~\eqref{e:SIRtau}.
Fix $\delta > 0$ and $S_0, I_0 \in (0, 1)$ with $S_0 + I_0 \leq 1$.
Let $\tau = [0, T]$, and consider the following two solutions to~\eqref{e:SIRtau}.
The first, denoted by $S, I$, with initial data $(S_0, I_0)$, and the second, denoted by $(S^\delta, I^\delta)$ with initial data~$(S_0, I_0 - \delta)$.
In the $S$-$I$ plane, must the curve $(S^\delta, I^\delta)$ lie below that of~$(S, I)$?

One might, at first sight, think this is certainly true.
However, since $\beta^\tau$ depends on~$t$, the system~\eqref{e:SIRtau} is not autonomous, and so it is possible for the curves $(S^\delta, I^\delta)$ and $(S, I)$ to cross each other.
Various such non-monotonicity phenomena were studied in~\cite{ChikinaPegden20a}. 
We will also provide a simple example of this shortly.

Fortunately, it turns out that if additionally we assume $Q(S_0, I_0, T) = 0$, then $(S^\delta, I^\delta)$ must eventually lie below the curve $(S, I)$.
This is all we need in the proof, and is stated as our next lemma.

\begin{lemma}\label{l:orderpreserving}
  Let $S_0, I_0 \in (0, 1)$ with $S_0 + I_0 \leq 1$, and $\delta \in (0, I_0)$.
  Let $\tau = [0, T]$, $(S, I)$ solve~\eqref{e:SIRtau} with initial data $S(0) = S_0$, $I(0) = I_0$, and let $(S^\delta, I^\delta)$ solve~\eqref{e:SIRtau} with initial data~$S^\delta(0) = S_0$, $I^\delta(0) = I_0 - \delta$.
  If $Q(S_0, I_0, T) = 0$, then for all sufficiently small~$\delta$ we must have $R^\delta(\infty) < R(\infty)$.
  (Here $R = 1 - S - I$, and $R^\delta = 1 - S^\delta - I^\delta$.)
\end{lemma}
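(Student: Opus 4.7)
The plan is to mimic the linearization strategy from the proof of Lemma~\ref{l:intCond}, but now perturbing the initial condition rather than shifting the quarantine interval.  Since $R = 1 - S - I$ and $I(t)\to 0$ as $t\to\infty$, it suffices to show $S^\delta(\infty) > S(\infty)$. Because $\beta = \beta_n$ after time $T$, the quantity $c_n = I + S - \rho_n\log S$ is conserved on $[T,\infty)$, and the analysis of the curves $\Gamma_c$ in Lemma~\ref{l:intCond} shows that $S(\infty)$ is an increasing function of $-c_n$.  Writing $c_n^\delta$ and $c_n$ for the post-quarantine constants of the two trajectories, the goal reduces to showing $c_n^\delta - c_n < 0$ for all sufficiently small $\delta > 0$.

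First I will linearize $c_n^\delta - c_n$ in $\delta$. Let $S_1 = S(T)$, $I_1 = I(T)$, $S_1^\delta = S^\delta(T)$, $I_1^\delta = I^\delta(T)$, and $\Delta S = S_1^\delta - S_1$. Using conservation of $I + S - \rho_q \log S$ on $[0,T]$, which gives $I^\delta_1 - I_1 = g_q(S_1^\delta) - g_q(S_1) - \delta$ with $g_q$ as in \eqref{e:gdef}, a short computation yields
\begin{equation*}
  c_n^\delta - c_n = (\rho_q - \rho_n)\frac{\Delta S}{S_1} - \delta + O(\delta^2).
\end{equation*}
Next I will compute $\Delta S$ to first order. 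Writing the quarantine duration as $T = \int_{S_1}^{S_0} ds/(\beta_q s I(s))$, where $I$ is viewed as a function of $S$ via the $g_q$-conservation law, and noting that for the perturbed trajectory this becomes $T = \int_{S_1^\delta}^{S_0} ds/(\beta_q s (I(s)-\delta))$ (since the $\delta$-curve lies on the $g_q$-level set shifted down by $\delta$), expanding in $\delta$ gives
\begin{equation*}
  \Delta S = \delta \beta_q S_1 I_1 \int_0^T \frac{dt}{I(t)} + O(\delta^2).
\end{equation*}
Substituting back yields
\begin{equation*}
  c_n^\delta - c_n = \delta\paren[\Big]{\beta_q(\rho_q - \rho_n) I_1 \int_0^T \frac{dt}{I(t)} - 1} + O(\delta^2).
\end{equation*}

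The crux of the proof is showing that the bracketed quantity is negative, and this is exactly where the hypothesis $Q(S_0,I_0,T) = 0$ enters.  Using $\partial_t(1/I) = (\gamma - \beta_q S)/I$ one gets $1/I_1 - 1/I_0 = \int_0^T (\beta_q S - \gamma)/I\,dt$; combining this with $Q = \int_0^T(\gamma-\beta_n S)/I\,dt = 0$ and the identity $\beta_q(\rho_q - \rho_n) = \gamma(\beta_n - \beta_q)/\beta_n$, one verifies (this is essentially the reverse of the simplification used after \eqref{e:oneoverItau} in the proof of Lemma~\ref{l:intCond}) that
\begin{equation*}
  \beta_q(\rho_q - \rho_n)\int_0^T \frac{dt}{I(t)} = \frac{1}{I_1} - \frac{1}{I_0}.
\end{equation*}
Multiplying by $I_1$ makes the bracket equal to $-I_1/I_0$, so $c_n^\delta - c_n = -\delta\, I_1/I_0 + O(\delta^2) < 0$ for all sufficiently small $\delta > 0$, which is exactly what we need.

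The main obstacle is essentially bookkeeping rather than a conceptual difficulty: the $\delta$-perturbation propagates through both the trajectory during $[0,T]$ and the time-duration constraint, and one must track both carefully to isolate the contribution of the hypothesis $Q = 0$.  Once the two first-order computations (for $c_n^\delta - c_n$ and for $\Delta S$) are in hand, the crucial identity is a direct consequence of the structure already exposed in the proof of Lemma~\ref{l:intCond}, and the clean form of the leading term $-\delta I_1/I_0$ makes the sign transparent.
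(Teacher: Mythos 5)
Your proposal is correct and follows essentially the same route as the paper's own proof: linearize $c_n^\delta - c_n$ in $\delta$, obtain the coefficient $\beta_q(\rho_q-\rho_n)I_1\int_0^T dt/I - 1$, and use $Q(S_0,I_0,T)=0$ to identify this coefficient with $-I_1/I_0<0$. One small sign slip: since $\partial_t(1/I)=(\gamma-\beta_q S)/I$, integrating gives $1/I_1-1/I_0=\int_0^T(\gamma-\beta_q S)/I\,dt$, not $\int_0^T(\beta_q S-\gamma)/I\,dt$ as you wrote; your subsequent identity $\beta_q(\rho_q-\rho_n)\int_0^T dt/I = 1/I_1-1/I_0$ and the final conclusion are nevertheless stated correctly, so the argument goes through.
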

\begin{proof}
  Let $S_1 = S(T)$, $I_1 = I(T)$, $S^\delta_1 = S^\delta(T)$, and $I^\delta_1 = I^\delta(T)$.
  We will first show
  \begin{equation}\label{e:reversecondition1}
    I_1+S_1-\rho_n\log S_1 > I_1^{\delta}+S_1^{\delta}-\rho_n\log S_1^{\delta}
  \end{equation}
  if and only if
  \begin{gather}\label{e:reversecondition}
       \beta_q \paren{\rho_q - \rho_n}
      I(T)
      \int_{0}^{T}\frac{1}{I(t)} \, dt
      <1 \,.
  \end{gather}

  To see this, define
  \begin{equation*}
    c_1 = I_1 + S_1 - \rho_n \log S_1\,,
    \quad\text{and}\quad
    c^\delta_1 = I^\delta_1 + S^\delta_1 - \rho_n \log S^\delta_1
  \end{equation*}
  We claim
  \begin{equation}\label{e:cdeltaminusc}
    c^\delta_1 - c_1 = \delta \paren[\Big]{
	\beta_q (\rho_q - \rho_n) I_1 \int_0^T \frac{1}{I(t)} \, dt - 1
      }
      + O(\delta^2)\,,
  \end{equation}
  from which the equivalence of~\eqref{e:reversecondition1} and~\eqref{e:reversecondition} immediately follows.

  The proof of~\eqref{e:cdeltaminusc} is very similar to the proof of Lemma~\ref{l:intCond}.
  Let~$g_q$ be defined by~\eqref{e:gdef}, and let $c_0 = I_0 - g_q(S_0)$.
  Since $I - g_q(S)$ is conserved, we note
  \begin{equation*}
    I(t) = g_q(S(t)) + c_0\,,
    \qquad\text{and}\qquad
    I^\delta(t) = g_q(S^\delta(t) + c_0 - \delta\,,
  \end{equation*}
  for all $t \in [0, T]$.
  From~\eqref{e:S}, we see
  \begin{equation*}
    -\int_0^T \frac{\partial_t S}{S I}
    = \beta_q T
    = -\int_0^T \frac{\partial_t S^\delta}{S^\delta I^\delta}\,,
  \end{equation*}
  and hence
  \begin{equation*}
    \int_{S_1}^{S_0}
      \frac{ds}{s (g_q(s) + c_0)}
    = \int_{S^\delta_1}^{S_0}
	\frac{ds}{s (g_q(s) + c_0 - \delta)} \,.
  \end{equation*}
  Taylor expanding as in the proof of Lemma~\ref{l:intCond} immediately shows
  \begin{equation*}
    \Delta S \defeq S^\delta_1 - S_1
      =	\delta S_1 I_1 \int_{S_1}^{S_0} \frac{ds}{s (g_q(s) + c_0)^2 }
	+ O(\delta^2)
      =	\delta \beta_q S_1 I_1 \int_0^T \frac{dt}{I}
	+ O(\delta^2)\,.
  \end{equation*}
  Consequently,
  \begin{equation*}
    c^\delta_1 - c_1 =
      -\delta + (\rho_q - \rho_n) (\log S^\delta_1 - \log S_1)
      = -\delta \frac{\rho_q - \rho_n}{S_1} \Delta S + O(\delta^2)\,,
  \end{equation*}
  from which~\eqref{e:cdeltaminusc} follows.
  This establishes the equivalence of~\eqref{e:reversecondition1} and~\eqref{e:reversecondition}.

  Now we use this equivalence to prove Lemma~\ref{l:orderpreserving}.
  Using~\eqref{e:oneoverItau} we see that $Q(S_0, I_0, T) = 0$ is equivalent to
  \begin{equation}
    \frac{1}{I_0} - \frac{1}{I_1} + \beta_q (\rho_q-\rho_n) \int_{0}^{T}\frac{dt}{I} = 0\,.
  \end{equation}
  This implies 
  \begin{equation}
    \beta_q (\rho_q-\rho_n)I_1\int_{0}^{T}\frac{dt}{I} = 1-\frac{I_1}{I_0}<1
  \end{equation}
  as desired.
\end{proof}

\begin{remark*}
  Before proceeding further, we provide an example showing that Lemma~\ref{l:orderpreserving} is false if we drop the assumption that $Q(S_0, I_0, T) = 0$.
  To do this note that in the above proof we establish the equivalence between~\eqref{e:reversecondition1} and~\eqref{e:reversecondition} without using the assumption that $Q(S_0, I_0, T) = 0$.
  Thus, if we produce an example where~\eqref{e:reversecondition} is false, then~\eqref{e:reversecondition1} will also be false, which is what we want.

  To construct this example, suppose $\rho_n$ is very small, and $\rho_q < 1$.
  Choose $T$ such that $S(T) = \rho_q$, and let $\tau = [0, T]$.
  By making $I_0$ sufficiently small, $T$ can be made arbitrarily large.
  We choose $I_0$ large enough so that
  \begin{equation*}
    T > \frac{1}{\beta_q (\rho_q - \rho_n)}\,.
  \end{equation*}
  Now for $t \leq T$, note~\eqref{e:I} implies
  \begin{equation*}
    \partial_t I = \beta_q I (S - \rho_q) > 0\,.
  \end{equation*}
  Hence the left hand side in~\eqref{e:reversecondition} can be estimated by
  \begin{align*}
    \beta_q \paren{\rho_q - \rho_n}
	I(T)
	\int_{0}^{T}\frac{1}{I(t)} \, dt
      \geq \beta_q \paren{\rho_q - \rho_n} T
      > 1\,,
  \end{align*}
  by our choice of~$I_0$.
  This in turn implies~\eqref{e:reversecondition1} is false, and hence~$R^\delta(\infty) > R(\infty)$ for all sufficiently small~$\delta$, contrary to the conclusion of Lemma~\ref{l:orderpreserving}.
\end{remark*}

Next, to prove of Lemma~\ref{l:infTm}, we need a few elementary properties of~\eqref{e:SIRtau}.
\begin{lemma}\label{l:SIRbehavior}
  Given~$\tau \in \mathcal T$, let $(S^\tau, I^\tau)$ solve~\eqref{e:SIRtau} with initial data $I^\tau(0) = I_0 \in (0, 1)$ and $S^\tau(0) = 1 - I_0$.
  \begin{enumerate}
    \item
      For every $\tau \in \mathcal T$, the function $t \mapsto S^\tau(t)$ is strictly decreasing, and $I^\tau(\infty) = 0$.
    \item
      There exists $T_* = T_*(\beta_n, \gamma, T, I_0)$ such that for every $\tau \in \mathcal T$, we have
      \begin{equation*}
	0<S^{\tau}(t)< \frac{\gamma}{\beta_n}\,,
	\quad \text{for all } t>T_* \,.
      \end{equation*}
    \item
      For every $m \geq 1$, the functions $\tau \mapsto R^{\tau}(\infty)$ is continuous on $\bar{\mathcal T}_m$.
      Moreover, $R^\tau(\infty) = 1 - S^{\tau}(\infty) \in (0, 1)$.
  \end{enumerate}
\end{lemma}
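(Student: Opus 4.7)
\medskip
\noindent\textbf{Proof plan for Lemma~\ref{l:SIRbehavior}.}
The plan is to first establish assertion~(2), which then feeds into the proofs of~(1) and~(3). For~(2), I would combine the elementary lower bound $\partial_t \ln I^\tau \geq -\gamma$ (valid always since $S^\tau \geq 0$) with the observation that on any time interval where $S^\tau \geq \gamma/\beta_n$ we have $\partial_t \ln I^\tau \geq 0$ outside the quarantine. Since $\tau$ has total length exactly $T$, integrating these bounds on $[0, t]$ yields the $\tau$-independent lower bound $I^\tau(t) \geq I_0 e^{-\gamma T}$ whenever $S^\tau(s) \geq \gamma/\beta_n$ for all $s \leq t$. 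Combined with $R^\tau(t) = \gamma \int_0^t I^\tau \, ds \leq 1$, this forces $S^\tau$ to cross below $\gamma/\beta_n$ within a time of order $e^{\gamma T}/(\gamma I_0)$. Strict monotonicity of $S^\tau$ then propagates the bound to all later times, yielding a uniform $T_*$ depending only on $\beta_n, \gamma, T, I_0$.

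For~(1), positivity of $S^\tau$ and $I^\tau$ on $[0, \infty)$ follows from the elementary bounds $\partial_t \ln S^\tau \geq -\beta_n$ and $\partial_t \ln I^\tau \geq -\gamma$, which give the explicit estimates $S^\tau(t) \geq S_0 e^{-\beta_n t}$ and $I^\tau(t) \geq I_0 e^{-\gamma t}$. Strict decrease of $S^\tau$ is then immediate from $\partial_t S^\tau = -\beta^\tau S^\tau I^\tau < 0$. For $I^\tau(\infty) = 0$, I would use part~(2) to see that $\partial_t I^\tau = (\beta^\tau S^\tau - \gamma) I^\tau < 0$ for $t > T_*$, so the limit $I^\tau(\infty)$ exists; if this limit were positive, $\int_0^\infty I^\tau \, ds$ would diverge, contradicting $\gamma \int_0^\infty I^\tau \, ds = R^\tau(\infty) \leq 1$.

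For~(3), the key observation is that convergence $\tau_n \to \tau$ in $\bar{\mathcal T}_m \subseteq \R^{2m-1}$ is coordinate-wise, so there exists a single $N$ with $\tau_n, \tau \subseteq [0, N]$ for all large~$n$. On $[0, N]$ the piecewise-constant coefficients $\beta^{\tau_n}$ converge to $\beta^\tau$ in $L^1$, and standard continuous dependence of ODEs on parameters yields uniform convergence $(S^{\tau_n}, I^{\tau_n}) \to (S^\tau, I^\tau)$ on $[0, N]$. For $t > N$ both systems reduce to the autonomous SIR with $\beta \equiv \beta_n$, and the map from the initial data at time~$N$ to the terminal value $S(\infty)$ is continuous, as follows by applying the implicit function theorem to the conserved-quantity relation $S(\infty) - (\gamma/\beta_n) \ln S(\infty) = S(N) + I(N) - (\gamma/\beta_n) \ln S(N)$ on the stable branch $S(\infty) < \gamma/\beta_n$. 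Combining these gives $R^{\tau_n}(\infty) \to R^\tau(\infty)$. Finally $R^\tau(\infty) \in (0, 1)$ follows because $R^\tau$ is already strictly positive at any positive time (from positivity of $I^\tau$), while $\partial_t \ln S^\tau \geq -\beta_n I^\tau$ combined with $\int_0^\infty I^\tau \, ds \leq 1/\gamma$ yields $S^\tau(\infty) \geq S_0 e^{-\beta_n/\gamma} > 0$, hence $R^\tau(\infty) = 1 - S^\tau(\infty) < 1$.

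The main obstacle I expect is the uniformity in~$\tau$ required for part~(2): a naive estimate of $\partial_t \ln I^\tau$ during quarantine that tries to use the finer structure of $\tau$ will not be $\tau$-independent. The crucial insight is that the very crude bound $\partial_t \ln I^\tau \geq -\gamma$ is already enough, because it depends only on the total quarantine length~$T$ and not at all on the internal structure of~$\tau$; once this uniformity is secured, parts~(1) and~(3) follow via more standard ODE arguments.
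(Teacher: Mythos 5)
Your proof is correct and follows essentially the same approach as the paper; in particular, the key insight for part~(2) — that $\partial_t \ln I^\tau \geq -\gamma$ during quarantine while $\partial_t \ln I^\tau \geq 0$ outside quarantine as long as $S^\tau \geq \gamma/\beta_n$, yielding the $\tau$-uniform lower bound $I^\tau(t) \geq I_0 e^{-\gamma T}$ on the pre-crossing phase — is exactly the paper's, and the continuity in part~(3) via the implicit function theorem applied to the conserved-quantity relation is also the same. The minor variations (you bound the crossing time using $R^\tau \leq 1$ rather than the exponential decay of $S^\tau$, you derive $I^\tau(\infty) = 0$ from part~(2) and $\int_0^\infty I^\tau \leq 1/\gamma$ rather than citing standard autonomous SIR theory, and you explicitly verify $R^\tau(\infty) \in (0,1)$, which the paper leaves implicit) are all sound and arguably a bit more self-contained than the paper's version.
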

\begin{proof}[Proof of Lemma \ref{l:SIRbehavior}]
  From~\eqref{e:SIRtau} we see that $S^\tau, I^\tau > 0$ for all  $t > 0$.
  This implies $\partial_t S^\tau < 0$, showing $S^\tau$ is a decreasing function.
  Since $\tau$ is always a bounded set, $(S^\tau, I^\tau)$ satisfy~\eqref{e:S}--\eqref{e:I} with constant~$\beta$ for all large time.
  In this case it is well known that~$I^\tau$ decreases exponentially to~$0$ (see for instance~\cite{Weiss13,BrauerCastilloChavez12}).
  \medskip

  For the second assertion, note that $S^\tau(t) < 1$ for all $t > 0$.
  Thus if $\beta_n \leq \gamma$ we are done.
  Now we suppose $\beta_n > \gamma$.
  In this case if $1 - I_0 < \gamma / \beta_n$, then we simply choose $T_* = 0$.
  If not, suppose for some $T_0 \geq 0$ we have $S^\tau(T_0) \geq \gamma / \beta_n$.
  Since $S^\tau$ is decreasing, this implies $S^\tau(t) \geq \gamma / \beta_n$ for all $t \leq T_0$.
  Using~\eqref{e:SIRtau} we see that this means
  \begin{equation*}
    \partial_t I^\tau \geq
      \begin{cases}
	0 & t \in [0, T_0] - \tau\,,\\
	-\gamma I^\tau & t \in [0, T_0] \cap \tau\,.
      \end{cases}
  \end{equation*}
  Since the total length of~$\tau$ is~$T$, this implies
  \begin{equation*}
    I^\tau(t) \geq I_0 e^{-\gamma T}
    \quad\text{for all } t \leq T_0\,.
  \end{equation*}
  Using this in~\eqref{e:SIRtau} shows that
  \begin{equation*}
    S^\tau(t) \leq (1 - I_0) \exp\paren[\Big]{ -t I_0 e^{-\gamma T} }
    \quad\text{for all } t \leq T_0\,.
  \end{equation*}
  Since by assumption~$S^\tau(T_0) \geq \gamma / \beta_n$, this implies
  \begin{equation*}
    T_0 \leq \frac{e^{\gamma T}}{I_0} \log\paren[\Big]{\frac{\beta_n (1 - I_0)}{\gamma} } \defeq T_*\,.
  \end{equation*}
  Since $T_*$ is independent of~$\tau$, we obtain the second assertion of the lemma.
  \medskip

  Finally it remains to prove that $\tau \mapsto R^\tau(\infty)$ is continuous on $\bar{\mathcal T}_m$.
  To fix notation, identify $\tau$ with a subset of $[0, \infty)$ using~\eqref{e:tauTm}.
  By standard ODE theory we know that the function $\tau \mapsto (S^\tau(t_m + \ell_m), I^\tau(t_m + \ell_m))$ is continuous.
  After time $t_m + \ell_m$, we note that~$(S^\tau, I^\tau)$ satisfy~\eqref{e:S}--\eqref{e:I} with $\beta = \beta_n$.
  In this case it is know that
  \begin{equation*}
    S^{\tau}(\infty)
      = S^{\tau}(t_m+\ell_m)
	\exp\paren[\Big]{
	  -\frac{\beta_n}{\gamma}
	    \brak[\Big]{ S^{\tau}(t_m+\ell_m)+I^{\tau}(t_m+\ell_m)-S^{\tau}(\infty)}}\,.
  \end{equation*}
  The implicit function theorem now shows $\tau \mapsto S^\tau(\infty)$ is continuous.
  Since~$I^\tau(\infty) = 0$, and $S^\tau + I^\tau + R^\tau = 1$, this in turn implies $\tau \mapsto S^\tau(\infty)$ is continuous on $\bar{\mathcal T}_m$.
\end{proof}

Finally, we need to rule out the possibility that the infimum of $R^\tau(\infty)$ over $\mathcal T_m$ is attained at~$\infty$.
This is our next Lemma.
\begin{lemma}\label{l:inc}
  Let $T_*$ be as in Lemma~\ref{l:SIRbehavior}, and let $\tau = \cup_{i=1}^{n}[t_i,t_i+\ell_i] \in \mathcal T$ for some $n\geq 1$.
  Fix $\ell > 0$.
  For any $t \geq \max\set{t_{n}+\ell_{n}, T_*}$, define $\sigma(t) = \tau \cup [t, t+\ell] \in \mathcal T$.
  The function $t \mapsto R^{\sigma(t)}(\infty)$ is increasing in $t$.
\end{lemma}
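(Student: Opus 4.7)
My plan is to reduce monotonicity of $R^{\sigma(t)}(\infty)$ in $t$ to monotonicity of a certain conserved quantity, then to establish positivity of that quantity's derivative via the perturbation computation already carried out in the proof of Lemma~\ref{l:intCond}. Set $T_1 = \max\set{t_n + \ell_n, T_*}$ and for $t \geq T_1$ write $\sigma = \sigma(t)$. Because $\sigma$ imposes no quarantine after time $t + \ell$, the quantity $I^\sigma + S^\sigma - \rho_n \log S^\sigma$ is conserved on $[t + \ell, \infty)$, and sending time to infinity yields
$$c(t) \defeq I^\sigma(t + \ell) + S^\sigma(t + \ell) - \rho_n \log S^\sigma(t + \ell) = S^\sigma(\infty) - \rho_n \log S^\sigma(\infty).$$
By Lemma~\ref{l:SIRbehavior}(2), $S^\sigma(\infty) \in (0, \rho_n)$, and $x \mapsto x - \rho_n \log x$ is strictly decreasing on this interval, so $c$ strictly increasing in $t$ is equivalent to $R^\sigma(\infty) = 1 - S^\sigma(\infty)$ strictly increasing in $t$.

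The key sign computation is that $Q(S^\tau(t), I^\tau(t), \ell) > 0$ for every $t \geq T_1$. Since $\sigma(t)$ and $\tau$ agree on $[0, t]$, the initial data for $Q$ coincides with $(S^\tau(t), I^\tau(t))$. In the integrand $(\gamma - \beta_n S_q(s))/I_q(s)$ appearing in~\eqref{eq:defQ}, $S_q$ is strictly decreasing, so $S_q(s) \leq S^\tau(t) < \rho_n$ for all $s \in [0, \ell]$, where the last inequality uses $t \geq T_*$ together with Lemma~\ref{l:SIRbehavior}(2). Thus $\gamma - \beta_n S_q > 0$ and $I_q > 0$, giving $Q > 0$.

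Finally I would invoke the expansion~\eqref{e:ctildemc} from the proof of Lemma~\ref{l:intCond}. Although stated for a single-interval $\tau$, its derivation only inspects what happens on the interval being shifted, and the reduction already used in Lemma~\ref{l:lastintervalCond}---restart the ODE at $t_n + \ell_n$ with initial data $(S^\tau(t_n + \ell_n), I^\tau(t_n + \ell_n))$---applies verbatim, giving
$$c(t - \delta) - c(t) = -\delta \beta_q (\rho_q - \rho_n) I^\tau(t) I^\sigma(t + \ell) Q(S^\tau(t), I^\tau(t), \ell) + O(\delta^2).$$
Smoothness of $c$ in $t$---from smooth dependence of ODE solutions on parameters together with the implicit function theorem applied to the transcendental equation $x - \rho_n \log x = c$---lets us pass to the derivative $c'(t) = \beta_q (\rho_q - \rho_n) I^\tau(t) I^\sigma(t + \ell) Q(S^\tau(t), I^\tau(t), \ell)$, which is strictly positive on $(T_1, \infty)$ since $\rho_q > \rho_n$ and the remaining factors are positive. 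Strict monotonicity of $c$ on $(T_1,\infty)$ extends to $[T_1, \infty)$ by continuity, which via the first paragraph gives the claimed monotonicity of $R^{\sigma(t)}(\infty)$. The main obstacle is nothing conceptually new, but rather checking that the \emph{exact coefficient} in~\eqref{e:ctildemc}---not just its sign---transfers to this multi-interval perturbation; this is handled by inspecting the proof of Lemma~\ref{l:intCond}, which never uses that $\tau$ consists of only one interval apart from the fact that the perturbation starts at the beginning of the shifted interval.
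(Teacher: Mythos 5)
Your proof is correct and takes essentially the same route as the paper: both arguments rest on observing that $t \geq T_*$ forces $S^{\sigma(t)}(t) \leq \rho_n$, hence $Q(S^{\sigma(t)}(t), I^{\sigma(t)}(t), \ell) > 0$, and then on invoking the perturbation estimate from Lemma~\ref{l:intCond} applied to the final quarantine interval (with initial data restarted at $t_n + \ell_n$, exactly as in the proof of Lemma~\ref{l:lastintervalCond}). The only difference is one of packaging: the paper cites Lemma~\ref{l:intCond} part (1) as a black box to get the local inequality $R^{\sigma(t-\delta)}(\infty) < R^{\sigma(t)}(\infty)$ and leaves the passage from this to global monotonicity implicit, whereas you unwind~\eqref{e:ctildemc} to compute the derivative $c'(t)$ explicitly and deduce strict monotonicity from $c'(t) > 0$, which is somewhat cleaner but not a new idea.
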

\begin{proof}
  Note that for $t > T_*$, we must have~$S^{\sigma(t)}(t) \leq \rho_n$.
  Hence, by~\eqref{eq:defQ} we must have $Q( S^{\sigma(t)}(t), R^{\sigma(t)}(t), \ell ) > 0$.
  Now by Lemma~\ref{l:intCond} part~(1), we see that $R^{\sigma(t - \delta)}(\infty) < R^{\sigma(t)}$ for all sufficiently small $\delta$, finishing the proof.
\end{proof}

With the above tools, we are now ready to prove Lemma~\ref{l:infTm}.
\begin{proof}[Proof of Lemma \ref{l:infTm}]
  Let $T_*$ be as in Lemma~\ref{l:SIRbehavior}.
 Fix any $T^* > T+T_*$.
  Define $\mathcal T_m^* \subseteq \mathcal T_m$ by
  \begin{equation*}
    \mathcal T_m^* \defeq
      \set[\Big]{ (t_1, \ell_1, \dots, t_{m-1}, \ell_{m-1}, t_m) \st 
	    0< t_i < t_i + \ell_i < t_{i+1},\
	    \sum_{i=1}^{m-1} \ell_i < T,\ 
	    t_m < T^*
	  }\,.
  \end{equation*}
 As before, we identify $\tau\in \mathcal T_m^*$ with $\tau=(\cup_{i=1}^{m-1}[t_i, t_i+\ell_i])\cup [t_m, t_m+T-\sum_{j=1}^{m-1}\ell_j]\in \mathcal{T}$.
 Let $\bar{\mathcal T}_m^*$ denote the closure of $\mathcal T_m$ in $\R^{2m-1}$.  
  Note that for any $\tau \in \bar{\mathcal T}_m$, if the last contiguous interval in~$\tau$ starts after time $T_*$, then Lemma~\ref{l:inc} implies that shifting this interval to the left decreases $R(\infty)$.
  Moreover, if more than one contiguous interval in~$\tau$ starts after $T_*$, then repeatedly applying Lemma~\ref{l:inc} shows that they can be merged and shifted left to decrease $R(\infty)$, and $t_m$ can be shifted to be smaller than $T^*$.
  This implies
  \begin{equation*}
    \inf_{\tau \in \bar{\mathcal T}_m} R^\tau(\infty)
      = \inf_{\tau \in \bar{\mathcal T}^*_m} R^\tau(\infty)\,.
  \end{equation*}
  Since $\tau \mapsto R^\tau(\infty)$ is continuous (Lemma~\ref{l:SIRbehavior}), and $\mathcal T_m^*$ is compact, the infimum must be attained.
  Hence, there exists $\tau = (t_1, \ell_1, \dots, t_m, \ell_m) \in \bar{\mathcal T}_m$ such that $R^\tau(\infty) = \inf_{\tau \in \bar{\mathcal T}_m} R^\tau(\infty)$.

  We now claim that when $m > 1$, we must have $\tau \in \mathcal B_{m-1}$.
  To prove this it suffices to show that $\tau \not\in \mathcal T_m$.
  Suppose, for sake of contradiction, that~$\tau \in \mathcal T_m$.
  Let $\tau'$ and $Q_m$ be as in Lemma~\ref{l:lastintervalCond}.
  Since $\tau$ minimizes $R^\tau(\infty)$ by assumption, Lemma~\ref{l:lastintervalCond} implies that $Q_m = 0$. Then, $S^{\tau}(t)>\rho_n$ for all $t\in [t_{m-1}, t_{m-1}+\ell_{m-1})$ so that $Q(S^{\tau}(t_{m-1}), I^{\tau}(t_{m-1}), \ell_{m-1})<0$. 
  Let $\delta > 0$ be small and define $\sigma'$ by
  \begin{equation*}
    \sigma' = \paren[\Big]{ \bigcup_{i=1}^{m-1} [t_i, t_i + \ell_i] }
      \cup [t_{m-1} + \delta, t_{m-1} + \delta + \ell_{m-1} ]\,.
  \end{equation*}
  By continuity of solutions, there must exist $t_m' > t_{m-1} + \delta + \ell_{m-1}$ such that $S^{\sigma'}(t_m') = S^{\tau'}(t_m)$ when $\delta$ is small enough.
  Define~$\sigma = \sigma' \cup [t_m', t_m' + \ell_m]$, and observe that by Lemma~\ref{l:lastintervalCond} we must have $I^\sigma(t_m') = I^{\sigma'}(t_m') < I^\tau(t_m)$.
  Now, since~$Q_m = 0$, Lemma~\ref{l:orderpreserving} implies that~$R^\sigma(\infty) < R^\tau(\infty)$ for small $\delta$, as the gap between $I^\sigma(t_m')$ and $I^\tau(t_m)$ tends to zero when $\delta\to 0$ by continuity.
  Thus we have produced $\sigma \in \mathcal T$ such that $R^\sigma(\infty) < R^\tau(\infty)$, contradicting our assumption.
  This finishes the proof.
\end{proof}

\section{Numerical simulations.}\label{s:numerics}

We conclude this paper with numerical simulations showing how significant the reduction in $R(\infty)$ is.
We will also fix the time window when social distancing / quarantines  are in effect to be $30$ days (i.e.\ $T = 30$).
Choose $\gamma = 1/14$, corresponding to a recovery time of $14$ days, and consider a disease for which $\mathcal R_0 = 2.1$ normally, and $\mathcal R_0 = 0.8$ when social distancing / quarantines are in effect.
Figure~\ref{f:IR} (left) shows how the fraction of infected and removed individuals evolves with time.
In this case we see that $R(\infty)$ reduces from $0.82$ when no quarantine is imposed to $0.70$ when a $30$ day contiguous quarantine is optimally imposed.
As expected, we see that the optimal quarantine starts a little before the (unquarantined) infection levels peak, and ends a little after it.
Since the population attains herd immunity exactly when the infection levels peak, the unquarantined population attains herd immunity sometime during the optimal quarantine.
\begin{figure}[hbt]
  \includegraphics[width=.48\linewidth]{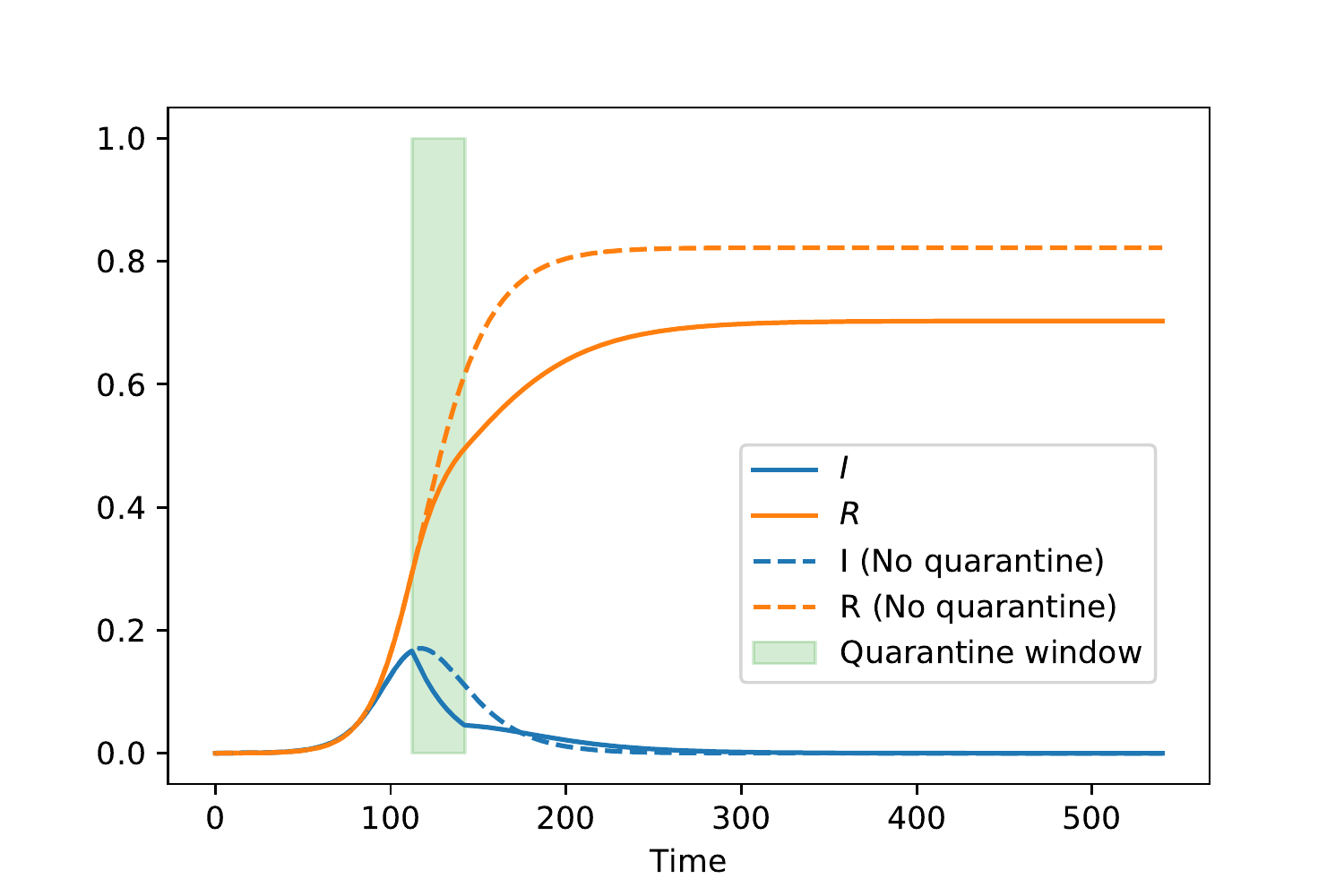}
  \quad
  \includegraphics[width=.48\linewidth]{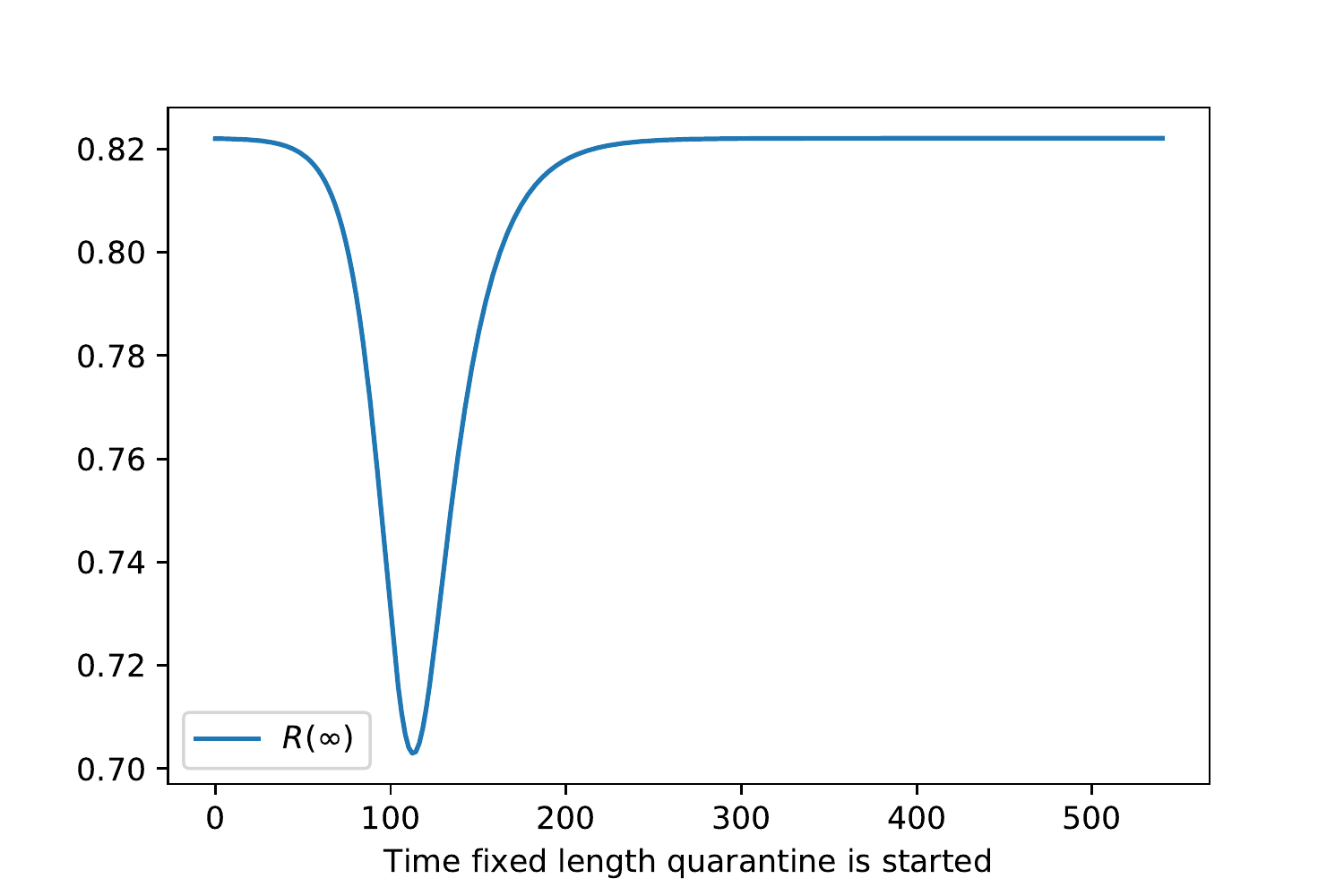}
  \caption{Left: $I$, $R$ vs $t$ both with a $30$ day, optimally scheduled, quarantine and without any quarantine.
  Right: The value of $R(\infty)$ vs the time when a $30$ day quarantine is started.}
  \label{f:IR}
\end{figure}

For comparison, we also plot how $R(\infty)$ varies based on the start of a $30$ day quarantine (Figure~\ref{f:IR}, right).
Here we see that when the quarantine is started too early, or too late, it has almost no impact on the value of $R(\infty)$.

Finally, in Figure~\ref{f:Rinf} we show how $R(\infty)$ varies when a~$30$ day quarantine is optimally imposed.
The two parameters we vary are $\mathcal R_0^n$, the basic reproduction number under normal circumstances, and~$\mathcal R_0^q$, the basic reproduction number when quarantines / social distancing are imposed.
Here we see that the reduction in $R(\infty)$ is larger when $\mathcal R_0^n$ is smaller.
\begin{figure}[hbt]
  \includegraphics[width=.48\linewidth]{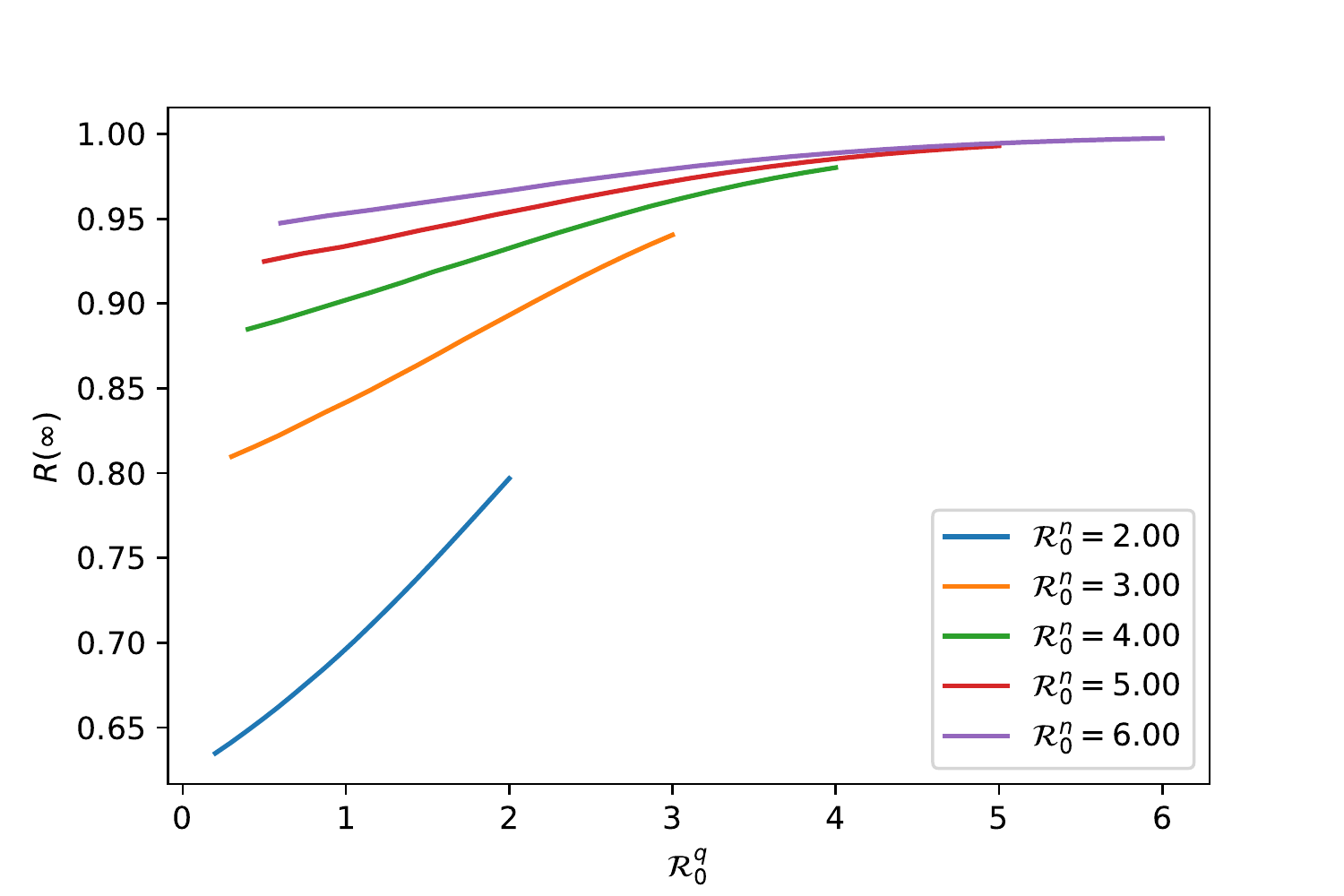}
  \quad
  \includegraphics[width=.48\linewidth]{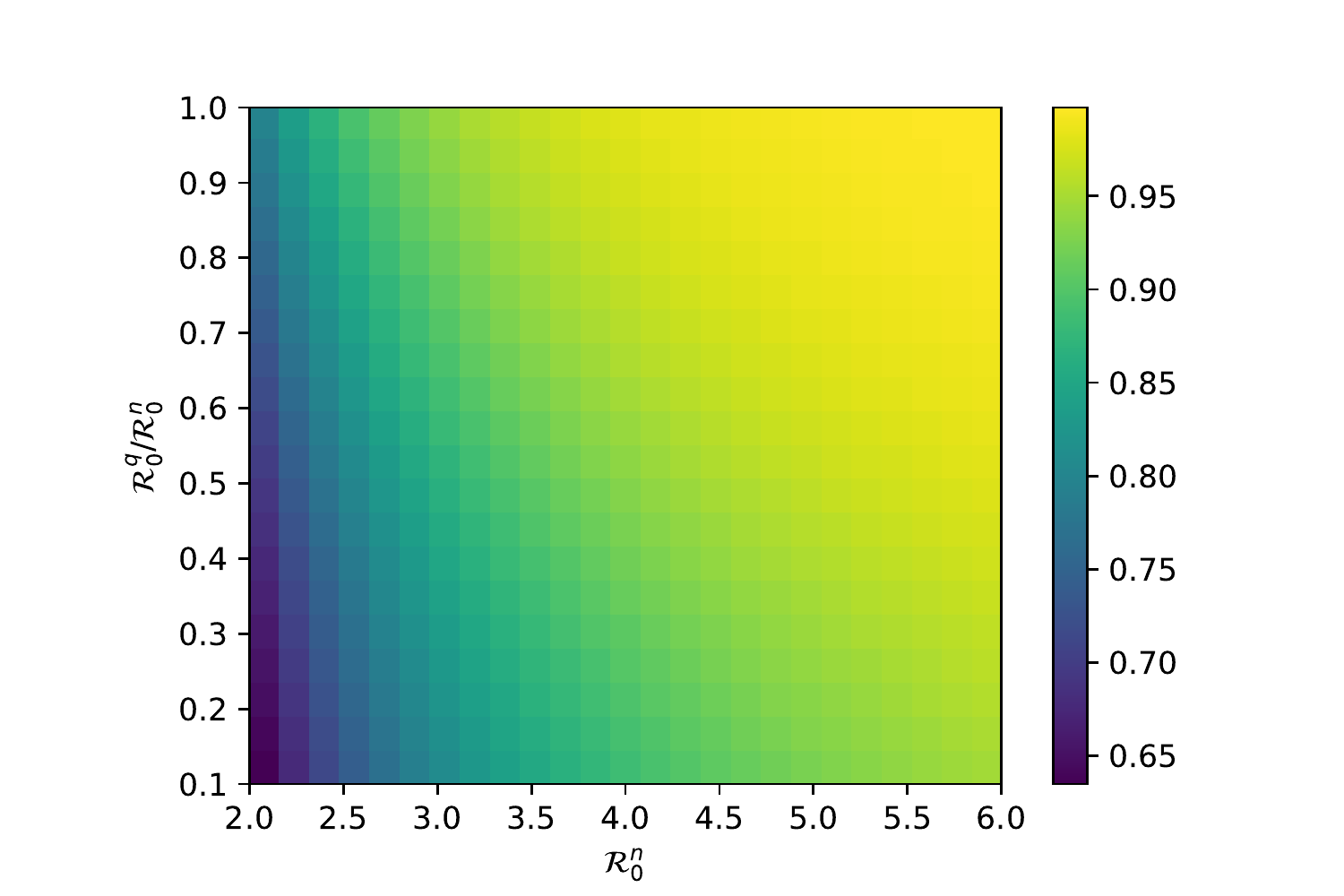}
  \caption{Minimum value of $R(\infty)$ when a $30$ day quarantine is optimally imposed.
    The figure on the left plots $R(\infty)$ vs $\mathcal R_0^q$ for a few different values of~$\mathcal R_0^n$.
    The figure on the right is a hot/cold plot of~$R(\infty)$ where $\mathcal R_0^n$ varies along the horizontal axis, and $\mathcal R_0^q/\mathcal R_0^n$  varies along the vertical axis.}
  \label{f:Rinf}
\end{figure}
\newpage

\bibliographystyle{halpha-abbrv}
\bibliography{refs,preprints}

\end{document}